\definecolor{orcidlogocol}{HTML}{A6CE39}
\newcommand{\intr}[1]{\mbox{int}({#1})}
\newcommand{\extr}[1]{\mbox{ext}({#1})}
\newcommand{\sk}[1]{\todo[color=cyan!10]{#1}\xspace}
\newcommand{\shahin}[1]{\todo[color=cyan!10]{#1}\xspace}
\newcommand{\shahininline}[1]{\todo[inline,color=cyan!10]{#1}\xspace}
\newcommand{\avery}[1]{\todo[color=green!10]{#1}\xspace}
\newcommand{\averyinline}[1]
\title{Cops and Robbers on 1-Planar Graphs\thanks{This research is funded in part by the Natural Sciences and Engineering Research Council of Canada (NSERC).}
}
\author{Stephane~Durocher\inst{1}
 \and Shahin~Kamali\inst{2}
 \and Myroslav~Kryven\inst{1}\orcidID{0000-0003-4778-3703}
 \and Fengyi~Liu\inst{1}
 \and Amirhossein~Mashghdoust\inst{1}
 \and Avery~Miller\inst{1}
 \and Pouria~Zamani~Nezhad\inst{1}
 \and Ikaro~Penha~Costa\inst{1}
 \and Timothy~Zapp\inst{1}
}
 \institute{Dept.~Computer Science, University of Manitoba, Winnipeg, Manitoba, Canada\\
   \texttt{firstname.lastname@umanitoba.ca}, \{\texttt{liuf3412}, \texttt{mashghda}, \texttt{zamaninp}, \texttt{zappt3}\}\texttt{@myumanitoba.ca} 
 \and Dept.~Electrical Engineering and Computer Science, York University, Toronto, Ontario, Canada\\
 \texttt{kamalis@yorku.ca}}
\begin{document}

\maketitle

\begin{abstract}
\emph{Cops and Robbers} is a well-studied pursuit-evasion game in which a set of cops seeks to catch a robber in a graph $G$, where cops and robber move along edges of $G$. The \emph{cop number} of $G$ is the minimum number of cops that is sufficient to catch the robber. Every planar graph has cop number at most three, and there are planar graphs for which three cops are necessary [Aigner and Fromme, DAM 1984]. 
We study the problem for 
\emph{1-planar graphs}, that is, graphs that can be drawn in the plane with at most one crossing per edge.
In contrast to planar graphs, we show that some 1-planar graphs have unbounded cop number. Meanwhile, for maximal 1-planar graphs, we prove that three cops are always sufficient and sometimes necessary. In addition, we completely determine the cop number of outer 1-planar graphs.

\keywords{Beyond Planarity \and $1$-Planar Graphs \and Pursuit–Evasion \and Cops and Robbers}
\end{abstract}

\section{Introduction}\label{sec:intro}
\emph{Pursuit–evasion} is a family of problems (also called games) in which one group seeks to capture members of another group in a given environment. There are many variants of pursuit–evasion games, depending on the game environment, information available to each player about the environment and other players, and restrictions on the freedom or speed at which players can move.

One of the most common and well-studied pursuit-evasion problems is the game of \emph{Cops and Robbers} on graphs, which was
formalized by Quilliot~\cite{quilliot-french-1978} and Nowakowski and Winkler~\cite{Nowakowski1983VertextovertexPI} in the 1980s; see also the recent book by Bonato and Nowakowski~\cite{cr-book}.
The game is played in a graph by two players: the robber player and the cop player. The common assumption that we also adopt here is that each player has full information about the graph and the other player's moves. The game consists of rounds (or steps) on a given graph. In the initial round, the cop player selects starting vertices for a set of cops, and then the robber player selects a starting vertex for a robber. In the subsequent rounds, the players alternate turns; during the cops' turn, the cop player may move some of the cops to adjacent vertices. Similarly, the robber player may move the robber to an adjacent vertex during the robber's turn. 
 The cop player wins if the robber and any of the cops are simultaneously on the same vertex; otherwise, when the game continues indefinitely, the robber player wins. If a single cop suffices to catch the robber in a graph $G$, even when the robber plays adversarially,\shahin{is ``adversarial" is the right word?, PZ: Do you think we should use Optimally? SD: adversarially is more general than optimally. Let's keep adversarially.}
 then $G$ is a \emph{cop-win graph}; otherwise, $G$ is a \emph{robber-win graph}. 
 The minimum number of cops necessary to catch the robber in $G$, denoted $c(G)$, is called the \emph{cop number} of $G$, and $G$ is a \emph{$c(G)$-cop win graph}.

\paragraph{\textbf{Related work}}
Various characterizations of $c$-cop win graphs are known~\cite{clarke-characterizations-2012}. For example, every cop-win graph has a \emph{domination elimination ordering sequence}~\cite{quilliot-french-1978} (also known as \emph{dismantling ordering}), i.e., an ordering of the vertices such that, if the vertices are removed in this order, each vertex (except the last) is dominated at the time it is removed. This characterization helped identify classes of cop-win graphs, such as chordal graphs (as every elimination ordering of a chordal graph is also a dismantling ordering) and visibility graphs~\cite{lubiw-visibility-graphs-dismantable-2017}. 

For some classes of graphs, a cop strategy comes directly from the graph's structural properties.
For example,
Aigner and Fromme~\cite{af-agocar-82} showed that every planar graph has a cop number of at most three by implicitly using 
the Jordan curve property, and, the fact that a cycle in a \emph{plane graph} (a graph drawn in the plane without edge crossings) partitions the graph into interior and exterior regions. Some classes of planar graphs have cop number two (e.g., series-parallel graphs~\cite{subdivision-lemma-and-tw-over-2} and outerplanar graphs).
 On the other hand, planar graphs with cop number three are known, e.g., the \emph{dodecahedron}, which is the skeleton of the platonic solid with faces of degree five. It is conjectured that the dodecahedron is the smallest planar graph with cop number three~\cite{af-agocar-82}. 
 Maurer et al.~\cite{workshop} give an example of a \emph{triangulation} (i.e., maximal planar graph) with cop number three.

Aigner and Fromme~\cite{af-agocar-82} also asked whether the result concerning planar graphs can be generalized to graphs of higher \emph{orientable genus}, which is the minimum number of handles attached to a sphere so that the graph could be drawn without crossings on the resulting surface. Shortly after, Quilliot \cite{q-asnapgpoagwagg-83} gave an upper bound of $2g+3$ on the cop number of graphs of genus $g$. Schr\"oder~\cite{schroder-3o2g-2001} improved this bound to $1.5g+3$ and conjectured that there is a tighter upper bound of $g+3$.  The current best upper bound of approximately $1.268g$ is due to Erde and Lehner~\cite{el-ibotcnoagdoas-21}; find details in the survey of Bonato and Mohar~\cite{bm-tdicar-17}. \shahininline{Do ``genus" and ``orientable genus" refer to the same thing? MK: the question as posed by Aigner and Fromme~\cite{af-agocar-82} was concerning only drawings on orientable surfaces,  by specifying ``orientable genus" we highlight that}
%
%

Having established the cop number of planar graphs, a natural next question concerns the cop number of graphs that are \emph{almost planar}.
Proximity to being planar for a given graph $G$ can be parameterized by the minimum number of crossings per edge among all drawings of $G$ in the plane, where a planar graph requires zero crossings per edge. \shahininline{Technically, you minimum maximum number of crossing, where minimum is taken over all drawings and max is taken over all edges. SD: True, but the sense comes across in the current wording, and I don't have a better way to express this concisely.}
In recent years, there has been an increasing interest in the family of graphs that generalize planar graphs, called \emph{beyond-planar graphs}, that is, graphs that can be drawn with few crossings~\cite{bp-book}. 
A common subclass of beyond-planar graphs is $k$-\emph{planar graphs}, that is, graphs that can be drawn in the plane with at most $k$ crossings per edge. Various properties of $k$-\emph{planar graphs} are known: their maximum edge density, relation to other families of beyond-planar graphs, as well as the complexity of many algorithmic problems~\cite{bp-survey}.
The special case of $1$-\emph{planar graphs} has been extensively studied, for example, their maximum edge density is $4n-8$~\cite{schumacher-1-planar}, which is tight, and they can be colored with at most seven colors~\cite{ringel1965sechsfarbenproblem}. Even though this might seem to generalize planar graphs only slightly, in contrast to planar graphs, determining whether a given graph is $1$-planar is NP-hard~\cite{grigoriev-2007,korzhik-2008}. Moreover, their relation to other families of beyond-planar graphs is well understood~\cite{relation}.
\shahininline{We need to continue this paragraph with a summary of results on 1-planar graphs. E.g., cite the existing results on their coloring number and a few other results to highlight their siginificance. SD: Let's save this for the final version. MK: mentioned some more results (coloring, edge density, and relation to other beyond-planar graphs) about 1-planar graphs for motivation.}
In this paper we only consider \emph{simple} drawings, that is, edges can cross at most once and adjacent edges do not cross.

\paragraph{\textbf{Our results}}
The upper bound of Quilliot~\cite{q-asnapgpoagwagg-83} on the cop number of graphs with bounded genus was one of the first steps in the study of the Cops and Robbers game on beyond-planar graphs. 
In this work, we extend this direction by studying bounds on the cop number in another prominent family of beyond planar graphs: $k$-planar graphs. Graphs of bounded genus and $k$-planar graphs are unrelated; i.e.,  there are 1-planar graphs with arbitrary large genus, and graphs that have genus one but require arbitrarily many crossings per edge to be drawn in the plane~\cite{eppstein-math-overflow}. 
We show in Section~\ref{sec:1-planar-unbounded-cr} that, despite the fact that planar graphs have cop number at most three, 1-planar graphs may have unbounded cop number; see Theorem~\ref{thm:1PlanarUnbounded}.
Constructing 1-planar graphs with large cop number, as described in the proof of Theorem~\ref{thm:1PlanarUnbounded}, results in sparse 1-planar graphs with many vertices of degree two. 
With this in mind, we consider dense 1-planar graphs, particularly maximal 1-planar graphs, in Section~\ref{sec:max1Planar}. 
A 1-planar graph is said to be \emph{maximal} if no edge can be added such that the resulting graph remains 1-planar.
Maximal 1-planar graphs with $n$ vertices and the maximum number $4n-8$ of edges~\cite{scgumacher-struktur-1986} are called \emph{optimal 1-planar}.  
Maximal 1-planar graphs can have fewer than $4n-8$ edges, e.g., there exist maximal 1-planar graphs that have as few as $2.65n$ edges~\cite{brand-1-planar-density-2012}. 
%
In Section~\ref{sec:quad}, we construct a \emph{planar quadrangulation} (a plane graph in which each face has exactly four edges) with cop number three.
Using this quadrangulation, in Section~\ref{sec:max1Planar-lowerBound}, we show how to construct a maximal 1-planar graph (which is also an optimal 1-planar graph) for which three cops are necessary to catch a robber. 
In Section~\ref{sec:max1Planar-upperBound}, we show that each maximal 1-planar graph has cop number at most three using the Jordan arc property, as used by Aigner and Fromme~\cite{af-agocar-82}, and structural properties of maximal 1-planar graphs. 

In Section~\ref{sec:outer-k-planar}, we consider \emph{outer $k$-planar graphs}~\cite{auer2016,chaplick-bp-2018}, where, in addition to each edge being crossed at most $k$ times, we also restrict the vertices to be in convex position on the outer boundary.  The cop number of an outer $k$-planar graph is bounded by $1.5k+6.5$ due to its relation to treewidth~\cite{subdivision-lemma-and-tw-over-2} (if the treewidth of a graph is at most $t$, then its cop number is at most $t/2 + 1$~\cite{subdivision-lemma-and-tw-over-2}). 
This is an interesting contrast to general $k$-planar graphs, which can have unbounded cop number. 
Finally, we examine outer 1-planar graphs, which have cop number at most two due to the upper bound via their treewidth, and characterize those that are cop-win. 
\shahininline{I suggest removing the discussion on k-outerplanar graphs in the main paper and only mentioning that -along with the treewidth results- in the conclusion section and a topic for future work.}

\shahininline{The paper can benefit from a table with a summary of all results, including previous work (results on planar and bounded genus graphs) along with our contributions on I) (general) 1-planar graphs II) maximal outer-planar graphs  III) outer 1-planar graphs IV) the trivial upper bound for k-outerplanar graphs. We can link to the theorems in each case for better navigation of the paper. }

\section{$1$-Planar Graphs May Have Unbounded Cop Number}\label{sec:1-planar-unbounded-cr}
In this section we show that there exist 1-planar graphs with unbounded cop number.

\begin{theorem}
\label{thm:1PlanarUnbounded}
For every $c \in \mathbb{N}$, there exists a 1-planar graph with cop number at least $c$.
\end{theorem}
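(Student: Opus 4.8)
The plan is to exhibit, for each $c$, a 1-planar graph whose cop number is at least $c$. The natural approach is to take a known family of graphs with unbounded cop number that happen to be 1-planar, or to build such a family from scratch by "1-planarizing" a dense host graph. The key tension is that 1-planar graphs are sparse ($|E| \le 4n-8$), so they cannot contain large complete or complete bipartite graphs directly; unbounded cop number must come from girth/expansion-type obstructions rather than from density alone. The classical fact I would invoke is that graphs with girth at least $5$ and minimum degree at least $d$ have cop number at least $d$ (Aigner--Fromme; more generally, a graph of girth $\ge 5$ has cop number at least its minimum degree, since after one cop's move the cop dominates at most one neighbor of the robber's vertex). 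So it suffices to produce, for every $d$, a 1-planar graph with girth $\ge 5$ and minimum degree $\ge d$.

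First I would recall the hint embedded in the excerpt: the construction "results in sparse 1-planar graphs with many vertices of degree two." This suggests the following concrete route. Start with a $d$-regular graph $H$ of girth at least $5$ (these exist for every $d$, e.g. by probabilistic or algebraic constructions, or incidence graphs of generalized polygons). Now draw $H$ in the plane with some number of crossings, and \emph{subdivide} or otherwise locally modify near each crossing so that every edge is crossed at most once. The standard trick: replace each crossing of two edges $e$ and $f$ by a small "crossing gadget." The degree-two vertices arise because subdividing an edge introduces degree-two vertices along it. The subtlety is that subdivision changes the cop number, so one must use a modification that preserves (a lower bound on) the cop number. Here I would appeal to the known \emph{subdivision lemma}: subdividing every edge of a graph the same number of times does not decrease the cop number (indeed Aigner--Fromme-style arguments and the results cited as \cite{subdivision-lemma-and-tw-over-2} show the cop number is monotone or near-monotone under uniform subdivision). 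So: take $H$ with cop number $\ge c$, subdivide each edge enough times that a 1-planar drawing becomes possible, and argue the cop number stays $\ge c$.

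The main obstacle, and the step requiring the most care, is making the subdivision simultaneously (i) yield a 1-planar drawing and (ii) provably not reduce the cop number below $c$. For (i), one needs that after subdividing each edge of $H$ into a path with enough internal vertices, the graph admits a drawing with at most one crossing per edge; this is plausible because one can route each original edge as a long path and "spend" crossings on distinct subdivision-edges, but it needs a clean argument that $k$ crossings on one original edge can be spread over $k$ distinct sub-edges while keeping adjacent sub-edges crossing-free and each sub-edge crossed at most once. For (ii), the cleanest tool is a theorem stating that if $G'$ is obtained from $G$ by replacing each edge by a path of the \emph{same} length $\ell$, then $c(G') \ge c(G)$; uniform subdivision is essential here, so I would first take $H$ and replace \emph{every} edge by a path of a common length $\ell$ chosen large enough for 1-planarity, then apply the subdivision lemma. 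Alternatively, one can redo the girth-and-minimum-degree argument directly on the subdivided graph: the subdivided graph still has large girth, but its minimum degree is only $2$, so that direct route fails — which is exactly why the subdivision lemma (or a retract/homomorphism argument showing $H$ is a "minor-like" image of $G'$ that preserves cop number) is the right hammer. I would therefore structure the proof as: (1) cite a $d$-regular girth-$5$ graph $H$ with $c(H) \ge d \ge c$; (2) uniformly subdivide to get a 1-planar $G'$, proving 1-planarity of the drawing explicitly; (3) invoke the uniform-subdivision lemma to conclude $c(G') \ge c(H) \ge c$.
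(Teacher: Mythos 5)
Your proposal is correct and essentially identical to the paper's proof: the paper takes a graph of cop number $c$, uniformly subdivides every edge, places one subdivision vertex between each pair of consecutive crossings of a $k$-planar drawing to obtain 1-planarity, and invokes the uniform-subdivision lemma of Berarducci and Intrigila to conclude the cop number does not decrease. The only detail to add when writing this up is that the cited subdivision lemma applies to replacing each edge by a path of the same \emph{odd} length, which is why the paper subdivides $k-1$ times when $k$ is odd and $k$ times when $k$ is even.
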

\begin{proof}
For any $c$, there exists a graph $G$ with cop number $c$ \cite[Theorem 2.6]{Neufeld1998}. Choose a $k$-planar drawing of $G$ for some $k$. Define a graph $G'$ that is obtained from $G$ by subdividing each edge $k-1$ times if $k$ is odd, or $k$ times if $k$ is even; see Figure~\ref{fig:subdivide}. 
Berarducci and Intrigila~\cite[Theorem 5.6]{berarducci-subdivision-lemma-1993} showed that replacing each edge with the same odd-length path does not decrease the cop number, so it follows that $c(G') \geq c(G)$. 
Also, $G'$ is 1-planar as we can obtain a 1-planar drawing of $G'$ from the $k$-planar drawing of $G$: between every two consecutive crossings on an edge in $G$, place one of the added subdivision vertices in $G'$.
\shahininline{Is it important to start from the drawing with min crossing? if not, we do not emphasize on that. Also, should'nt we subdivide each edge exactly k times? FL: we are trying to introduce as fewer vertices as possible, original we want $c(G)=\theta(\sqrt{n})$} \shahininline{I suggest adding a figure and moving the proof to in its own section rather than the contribution section.}
%
 \qed
\end{proof}

\begin{figure}
    \centering
    \includegraphics{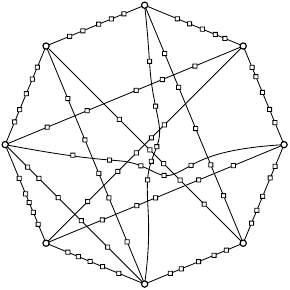}
    \caption{Subdividing each edge an equal number of times does not reduce the cop number of the resulting graph.}
    \label{fig:subdivide}
\end{figure}

\section{Cop Number of Quadrangulations}\label{sec:quad}


In this section, we study the cop number of \emph{quadrangulations}, that is, plane graphs in which every face has exactly four edges and four vertices. A graph $G$ can be drawn as a quadrangulation in the plane if and only if $G$ is a maximal planar bipartite graph~\cite{quad-mpb}. 
Since quadrangulations are planar, they have cop number at most three~\cite{af-agocar-82}. We show that there exists a quadrangulation~$Q$ for which three cops are necessary; see Theorem~\ref{thm:quadrangulation-cn-3}.  We will use this result in Section~\ref{sec:max1Planar} to prove 
that there are maximal 1-planar graphs with cop number three. 
To construct our quadrangulation $Q$ we first construct a  triangulation~$T$~\cite{workshop} which is based on the dodecahedron graph $D$; see Figure~\ref{fig:quadrangulation-cn-3}. The triangulation $T$ is constructed by adding vertices and edges to each face of $D$ in such a way that each resulting face is a triangle; see Figure~\ref{fig:dodecahedron-face}. The original dodecahedron edges are in black and the new triangulation edges are in dark green. We construct our quadrangulation $Q$ by adding one vertex and three edges to each face of this triangulation in such a way that each resulting face is a quadrangle; see Figure~\ref{fig:dodecahedron-face}. The new quadrangulation edges are in light green. 
After quadrangulating the triangulation, we have subdivided the original edges of the dodecahedron once, so  
the vertices of the dodecahedron that were initially adjacent are now at distance two from each other. Let $V'\subset V(Q)$ be the set of such vertices of $Q$ that subdivide the original edges of the dodecahedron.


\begin{theorem}
\label{thm:quadrangulation-cn-3}
The quadrangulation $Q$ has cop number three.
\end{theorem}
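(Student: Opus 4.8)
===

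The plan is to establish two facts: an upper bound ($c(Q) \le 3$) and a matching lower bound ($c(Q) \ge 3$). The upper bound is immediate since $Q$ is planar, so by Aigner and Fromme~\cite{af-agocar-82} three cops suffice. The real work is the lower bound: we must exhibit a winning strategy for the robber against any two cops. The natural approach is to transfer the known lower bound for the dodecahedron $D$ (where two cops cannot win) to $Q$ via the structural relationship between the two graphs, namely that $Q$ is obtained from $D$ by subdividing each original edge once and then filling faces with triangulation and quadrangulation gadgets.

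First I would set up the robber's invariant. Since $V' \subset V(Q)$ is the set of subdivision vertices of the original dodecahedron edges, and since $D$ itself is a robber-win graph against two cops, the idea is to have the robber ``simulate'' a winning robber strategy on $D$ while physically living on the vertices of $Q$ that correspond to $D$'s vertices (and transiting through $V'$). The key observation to nail down is that the extra vertices and edges added inside each face of $D$ (the dark-green triangulation edges and the light-green quadrangulation edges, plus the new face vertices) do not create any ``shortcut'' between two non-adjacent dodecahedron vertices: any path in $Q$ between two original dodecahedron vertices that uses interior-of-face vertices is no shorter than the corresponding route in the subdivided dodecahedron. This is a planarity/Jordan-curve argument: each added gadget is confined to a single face of $D$, so a path leaving a face must re-enter through a vertex on that face's boundary cycle, and the boundary cycle vertices are exactly the (subdivided) dodecahedron vertices.

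Next I would make precise the correspondence between cop positions in $Q$ and ``shadow'' cop positions in the subdivided dodecahedron $D'$ (where $D'$ is $D$ with each edge subdivided once, so $V(D') = V(D) \cup V'$). When a cop in $Q$ sits on a gadget-interior vertex of some face $f$ of $D$, its shadow is placed on some boundary vertex of $f$; one checks that as the cop moves one step in $Q$, its shadow moves at most one step in $D'$ (or stays put), using again that the gadget is planar and lies inside $f$. Then the robber, playing on $D' \subseteq Q$, runs the optimal robber strategy for two cops on $D$ — more precisely, on $D'$, using the fact (from Berarducci and Intrigila~\cite{berarducci-subdivision-lemma-1993} or a direct argument) that subdividing each edge an equal number of times preserves the cop number, so $c(D') = c(D) = 3 > 2$. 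Against the two shadow cops in $D'$, the robber evades forever, and since real cops can only do what their shadows permit, the robber evades the real cops in $Q$ forever as well. Care must be taken with the robber's own moves: the robber only ever occupies vertices of $D'$, and each edge of $D'$ corresponds to a two-edge path in $Q$ through a $V'$ vertex, so a single robber move in $D'$ is realized as... wait — actually a robber move along an edge of $D'$ is a single edge in $Q$, since $D'$ is a subgraph of $Q$; the subdivision vertices $V'$ are literally vertices of $Q$. So the robber's moves are legitimate single steps in $Q$.

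The main obstacle I anticipate is the shadow-cop argument: proving rigorously that a cop wandering through the triangulation/quadrangulation gadget inside a dodecahedron face cannot gain ground on the robber faster than a single cop moving along $D'$. One must handle the case where a cop enters a face, moves around inside, and exits through a different boundary vertex — the shadow must track this without ever ``teleporting.'' The cleanest way is to define the shadow as a nearest boundary vertex (breaking ties consistently) and to verify, using the bounded diameter of each gadget and the fact that consecutive cop positions are adjacent in $Q$, that consecutive shadow positions are within distance one in $D'$; combined with the standard observation that one may always assume the shadow cop actually occupies the vertex the real cop would need to reach to threaten the robber. A secondary subtlety is confirming that $Q$ really is a quadrangulation (every face a 4-cycle) as claimed by the construction, but that is a routine check on the gadget of Figure~\ref{fig:dodecahedron-face} and not part of the cop-number argument proper.
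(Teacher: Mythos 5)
Your upper bound is fine and matches the paper (planarity plus Aigner--Fromme). For the lower bound, however, there is a genuine gap at exactly the point you flag as ``the main obstacle'': the shadow-cop argument. Your plan requires a map from $V(Q)$ to $V(D')$ (where $D'$ is the once-subdivided dodecahedron) that fixes $D'$ and moves by at most one $D'$-edge per cop move, i.e.\ essentially that $D'$ is a retract of $Q$, or at least that a ``nearest boundary vertex'' shadow cannot teleport. This is not established and is doubtful: a cop sitting deep inside the gadget filling a pentagonal face of $D$ can be (roughly) equidistant from several of the ten boundary vertices of that face, and a single step by the cop can change which boundary vertex is nearest from one corner of the pentagon to a non-adjacent corner, which is a jump of $D'$-distance up to four. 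Consistent tie-breaking does not repair this, because the underlying difficulty is that one cop inside a face simultaneously threatens several pairwise-distant boundary vertices, something a single shadow cop confined to the boundary $10$-cycle cannot simulate. The Berarducci--Intrigila subdivision lemma you invoke relates $D$ to $D'$ only; it says nothing about $Q$, which is not a subdivision of $D$ but has substantial extra structure inside each face. So the chain ``$c(D')=c(D)=3$, hence two cops lose on $Q$'' is not justified as written.

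The paper avoids this global retraction claim entirely and argues locally. The robber waits on a dodecahedron vertex $r$ until a cop becomes adjacent; it then looks at the three neighbours $d_1,d_2,d_3$ of $r$ in $D$ and the subdivision vertices $d_1',d_2',d_3'$, and uses the concrete structural fact (read off from the drawing of $Q$) that the three ``escape corridors'' $N[d_i']\cup N^2[d_i]$ pairwise intersect only in $r$, which is cop-free. Hence two cops can block at most two of the three corridors, and the robber moves to a cop-free $d_j'$ and then $d_j$, restoring the invariant. If you want to salvage your approach, you would either have to prove the one-Lipschitz shadow property for the specific gadget (which appears to fail) or replace it with this kind of local disjoint-neighbourhoods argument, which is what the paper actually does.
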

\begin{proof}
Because the quadrangulation $Q$ is a planar graph, its cop number is at most three~\cite{af-agocar-82}.
We will show that $Q$ has cop number at least three. In particular, we show that if there are only two cops, there are vertices in $V(D)\cup V' \subset V(Q)$ that the robber can choose to move to without being caught, using the following strategy. For any vertex $v$, denote by $N[v]$ the set of vertices within distance at most one from $v$, and denote by $N^2[v]$ the set of vertices within distance at most two from $v$. The robber begins on a vertex $r$ of the dodecahedron $D$, and remains on this vertex until a cop moves to a vertex adjacent to $r$ in $Q$. \todo[inline]{MK: this is not necessary, we can show an even stronger result, namely, that the robber can be active, that is, move every time on its turn}
For each neighbour $d$ of $r$ in the dodecahedron $D$, denote by $d' \in V'$ the vertex that subdivides the edge $rd$. \todo[inline]{MK: for notation consistency let's use simply $rd$ to denote an edge, previously it was $\{r,d\}$} The robber inspects the neighbors of $r$ in the dodecahedron $D$ and chooses such a neighbor $d$ that $N[d'] \cup N^2[d]$ contains no cops, and the robber's next two turns consist of moving to $d'$ then $d$. Once the robber arrives at the dodecahedron vertex $d$, it repeats the above strategy. 

We now prove by induction that the robber can follow the strategy indefinitely and will never be captured by any of the two cops. Suppose we're at the start of round $k$, the robber is at a vertex $r\in V(D)$, and it is not yet caught. This means that neither of the two cops is at $r$. If there are no cops adjacent to $r$, then the robber stays where it is, so it is not yet caught and it is located at a vertex in $V(D)$ at the start of round $k+1$. Otherwise, suppose there is at least one cop adjacent to $r$. Let the three neighbors of $r$ in $D$ be $d_1, d_2$, and $d_3$ and the corresponding dodecahedron subdivision vertices be $d'_1, d'_2$, and $d'_3$; see Figure~\ref{fig:quadrangulation--cn-3-proof}. For each $i, j \in \{ 1, 2, 3\}$ such that $i\neq j$, the only vertex shared by $N[d'_i] \cup N^{2}[d_i]$ and $N[d'_j] \cup N^{2}[d_j]$ (see the gray regions in Figure~\ref{fig:quadrangulation--cn-3-proof}) is $r$ which, by the induction assumption, is not occupied by any of the cops. Therefore, each cop can be in at most one vertex in $N[d'_i] \cup N^{2}[d_i]$, for~$i\in\{1, 2, 3\}$. Since there are only two cops, it follows that, for some $j \in \{1,2,3\}$, the set of vertices $N[d'_j] \cup N^{2}[d_j]$ is cop-free. Therefore, the robber can follow the given strategy, i.e., moving to $d_j'$ in round $k$ then $d_j$ in round $k+1$. No cop can capture the robber at $d_j'$ in round $k$ since there were no cops in $N[d'_j]$ at the start of round $k$, and, no cop can capture the robber at $d_j$ in round $k+1$ since there were no cops in $N^2[d_j]$ at the start of round $k$. Thus, the robber is not yet caught and it is located at a vertex in $V(D)$ at the start of round $k+2$. \shahin{Made some changes to clarify the proof. It still needs more attention.} \avery{I've made a pass through the proof, hopefully good now?}
\qed
\end{proof}

\shahininline{If (a) depicts $D$, add it to the figure. Same for (b) and Q. Also, show its triangulation that is used to form (b). Keep the scale of (a) in (b); it is a bit confusing now. (that is, (b) must be a larger graph).}
\begin{figure}
  \hfill
  \begin{subfigure}[b]{.45\textwidth}
    \centering
    \includegraphics[page=1]{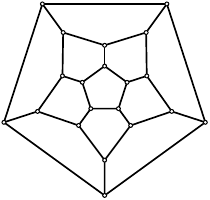}
    \caption{The dodecahedron graph $D$.}
    \label{fig:dodecahedron}
  \end{subfigure}
  \hfill
  \begin{subfigure}[b]{.45\textwidth}
    \centering
    \includegraphics[page=2]{dodecahedron}
    \caption{Each face of the dodecahedron graph $D$ in the quadrangulation $Q$.}
    \label{fig:dodecahedron-face}
  \end{subfigure}
  \caption{Construction of a quadrangulation $Q$ with cop number three from the dodecahedron graph $D$ (the skeleton of a dodecahedron).}
  \label{fig:quadrangulation-cn-3}
\end{figure}

\begin{figure}
    \centering
    \includegraphics{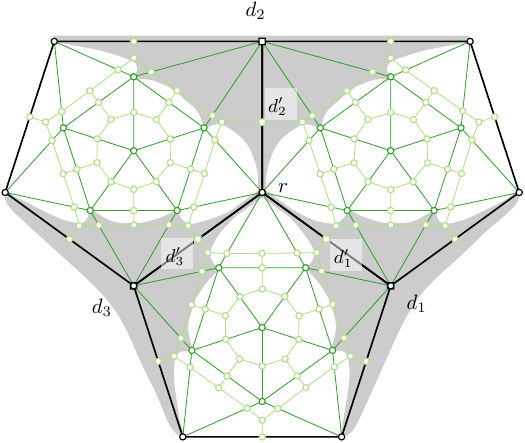}
    \caption{Three faces of the dodecahedron graph $D$ in quadrangulation $Q$. The edges of $D$ in black, of $T$ in dark green, and of $Q$ in light green.}
    \label{fig:quadrangulation--cn-3-proof}
\end{figure}
\shahininline{Fig.2: Make the vertices larger if possible. It is not clear that $d'_2$ is indeed a vertex. (MK: increased the vertex sizes) Btw, does the proof work for a single $Q$ or for three $Q$s as subgraphs? Is it clarified in the proof? MK: not sure if I understand the question correctly. We use only one copy of $Q$ in the proof}


\section{Cop Number of Maximal 1-Planar Graphs}\label{sec:max1Planar}

\subsection{Lower Bound}\label{sec:max1Planar-lowerBound}
We prove that there exists a maximal 1-planar graph, which is also optimal 1-planar, with cop number at least three. The structure of optimal 1-planar graphs is well known:

\averyinline{Are we actually citing anything from Brinkmann et al.? MK: good point,  Brinkmann et al. do not talk about crossings, removed the ref}
\begin{lemma}[Suzuki~\cite{s-reom1pg-2010}]
\label{lem:1-planar-subgraphs}
For any embedded graph $G$, denote by $Q(G)$ the subgraph of $G$ induced by the non-crossing edges. Then: (1) If $G$ is an embedded optimal 1-planar graph, then $Q(G)$ is a plane quadrangulation; and, (2) Let $H$ be an arbitrary simple planar quadrangulation. There exists a simple optimal 1-planar graph $G$ such that $H=Q(G)$ if and only if $H$ is $3$-connected.
 %
\end{lemma}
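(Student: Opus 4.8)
The plan is to treat this as (a restatement of) Schumacher's structure theorem for optimal $1$-planar graphs~\cite{scgumacher-struktur-1986}: I would prove part~(1) by a density/Euler argument that forces a ``kite'' decomposition of the drawing, and part~(2) by an explicit reconstruction together with the connectivity characterization of plane quadrangulations. Throughout I will lean on the known tightness analysis of the bound $|E(G)|\le 4n-8$ for $1$-planar graphs~\cite{schumacher-1-planar} rather than reprove it from scratch.

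\smallskip\noindent\emph{Part (1).} The key structural input is that in a drawing attaining $4n-8$ edges every crossing is \emph{covered by a kite}: whenever edges $ac$ and $bd$ cross, the edges $ab,bc,cd,da$ all belong to $G$, are uncrossed, bound a $4$-cycle, and the interior of $abcd$ contains nothing but the crossing point. I would obtain this from Euler's formula applied to the planarization $G^{\times}$ (one dummy vertex per crossing): with $c$ crossings $G^{\times}$ has $n+c$ vertices and $4n-8+2c$ edges, and one argues that every face of $G^{\times}$ is a triangle carrying exactly one dummy vertex --- using that a triangular face cannot contain two dummies (that would be an edge crossed twice) and that the four face-incidences of each dummy exactly exhaust the faces --- which simultaneously pins down $c=n-2$. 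Deleting all crossing edges then merges, around each dummy, its four incident triangles into one quadrilateral face bounded by a kite $4$-cycle; since every face of $G^{\times}$ was such a triangle, the faces of $Q(G)$ are precisely these $n-2$ kite $4$-cycles, and as $G$ is simple, $Q(G)$ is a simple plane quadrangulation.

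\smallskip\noindent\emph{Part (2).} For the ``if'' direction, take a $3$-connected simple planar quadrangulation $H$ (whose embedding is unique by Whitney's theorem) and build $G$ by inserting both diagonals into every quadrilateral face, each such pair drawn to cross exactly once inside its face. Then $G$ is $1$-planar, and $|E(G)|=|E(H)|+2f=(2n-4)+2(n-2)=4n-8$ where $f=n-2$ is the number of faces, so $G$ is optimal $1$-planar with $Q(G)=H$. The only delicate point is that $G$ is \emph{simple}: a diagonal joins two vertices in the same part of the bipartition, hence is never already an edge of $H$; and no diagonal is inserted twice, because two distinct faces sharing a diagonal pair would force the two endpoints to have three common neighbors in $H$, hence a non-facial $4$-cycle, contradicting $3$-connectivity. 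For the ``only if'' direction, let $G$ be simple and optimal $1$-planar; part~(1) makes $H=Q(G)$ a quadrangulation, and the kite decomposition shows that $G$ is exactly $H$ with both diagonals added to every face. If $H$ were not $3$-connected it would contain a non-facial $4$-cycle, equivalently a same-part pair $\{x,y\}$ with at least three common neighbors; tracing the three internally disjoint paths between $x$ and $y$ through the embedding locates two faces sharing the diagonal pair $\{x,y\}$, so the reconstruction inserts the edge $xy$ twice and $G$ is not simple, a contradiction.

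\smallskip\noindent The step I expect to be the main obstacle is the bookkeeping in the ``only if'' direction: proving cleanly that a plane quadrangulation which is not $3$-connected must contain two faces with a common diagonal pair --- equivalently, establishing the characterization that a simple plane quadrangulation is $3$-connected iff every $4$-cycle bounds a face, and matching it carefully to the diagonal-insertion reconstruction and to the (embedding-dependent) notion of ``non-facial $4$-cycle''. The remaining ingredients --- the Euler count in part~(1) and the edge count in part~(2) --- are routine once the kite decomposition of optimal $1$-planar drawings is in hand, which is the other load-bearing fact.
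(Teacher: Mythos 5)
First, a point of comparison: the paper does not prove Lemma~\ref{lem:1-planar-subgraphs} at all --- it is imported verbatim from Suzuki~\cite{s-reom1pg-2010} (with the underlying kite/quadrangulation structure going back to Schumacher~\cite{scgumacher-struktur-1986}) and used as a black box to justify building $Q'$ from $Q$ by inserting crossing diagonals. So there is no in-paper argument to match yours against; what you have written is a from-scratch reconstruction of the cited theorem. Your architecture is the standard one (kite decomposition of optimal $1$-planar drawings via Euler counting; diagonal insertion into a $3$-connected quadrangulation; non-$3$-connectivity forcing a doubled diagonal), and at the level of a sketch it is the right proof.

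Two steps need more than you have given them. In Part~(1), the Euler computation $F=3n+c-6$ together with ``all faces of $G^{\times}$ have size at least $3$'' yields only the lower bound $c\ge n-2$; your phrase ``the four face-incidences of each dummy exactly exhaust the faces'' presupposes that every face already contains a dummy and is a triangle, which is what you are trying to prove. The missing ingredient is a second double count giving the matching upper bound: dummies are pairwise non-adjacent in $G^{\times}$ (an edge between two dummies would be an edge crossed twice), so a face of size $s$ has at most $\lfloor s/2\rfloor\le s-2$ dummy corners, whence $4c\le 2E-2F=2n-4+2c$, i.e., $c\le n-2$. Equality then forces all faces to be triangles with exactly one dummy corner each, and the rest of your Part~(1) goes through. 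In Part~(2), the ``only if'' direction rests on the chain ``$H$ not $3$-connected $\Rightarrow$ two faces share a diagonal pair,'' which you reduce to ``a non-facial $4$-cycle, equivalently a same-part pair with three common neighbours.'' That equivalence, and the fact that a $2$-cut of a simple plane quadrangulation must be a same-part pair lying as opposite corners of two distinct faces (rather than, say, an adjacent pair), is asserted rather than proved; you correctly flag it as the main obstacle, but it is not routine bookkeeping --- it is essentially the content of Suzuki's characterization, so as submitted the proposal proves the easy direction of Part~(2) and only gestures at the hard one.
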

Observe that the quadrangulation $Q$ defined in Section~\ref{sec:quad} is 3-connected. Therefore, according to the proof of Lemma~\ref{lem:1-planar-subgraphs} in \cite{s-reom1pg-2010}, we can construct an optimal 1-planar graph $Q'$ from $Q$ by adding the crossing diagonal edges in each quadrangular face, as illustrated by the brown edges in Figure~\ref{fig:optimal-cn-3-proof}. 

\begin{figure}
    \centering
    \includegraphics{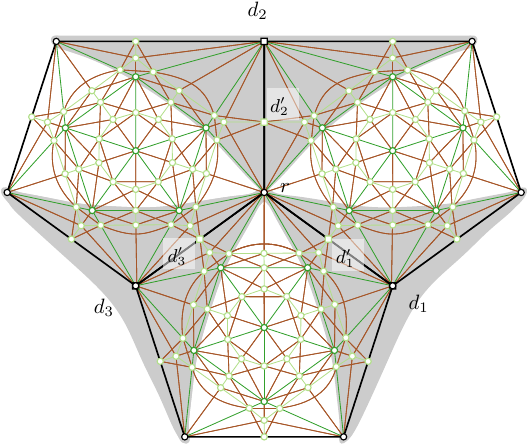}
    \caption{$Q'$ is formed from $Q$ by adding crossing diagonal edges (brown).}
    \label{fig:optimal-cn-3-proof}
\end{figure}

\shahininline{It may be better to change Theorem 2's statement to: There are maximal 1-planar graphs with cop number at least 3. Refer to $Q'$ in the proof and before stating theorem. MK: I see the point if this being a more general statement but we say this in the introduction and here we make a statement which is more informative, namely, states exactly which graph we mean}
\begin{theorem}
\label{thm:optimal-cn-3}
The maximal 1-planar graph $Q'$ has cop number at least three.
\end{theorem}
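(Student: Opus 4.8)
The plan is to reduce the claim about $Q'$ to the claim about $Q$ that was already established in Theorem~\ref{thm:quadrangulation-cn-3}. The graph $Q'$ is obtained from $Q$ by adding, inside each quadrangular face $abcd$, the two crossing diagonals $ac$ and $bd$. The key observation is that these new edges create shortcuts of length one between vertices that were at distance two in $Q$ (namely, opposite corners of a face), but do not change distances otherwise in a way the robber cannot tolerate. The robber strategy from Theorem~\ref{thm:quadrangulation-cn-3} had the robber sit on dodecahedron vertices $V(D)$ and use the subdivision vertices $V'$ only as transit points; I will argue that this strategy still evades two cops in $Q'$, because the quantities $N[\cdot]$ and $N^2[\cdot]$ that the proof relied on do not grow too much.

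First I would make the structural setup explicit: recall from Lemma~\ref{lem:1-planar-subgraphs}(2) (and the fact, noted after the lemma, that $Q$ is $3$-connected) that $Q'$ is a well-defined optimal, hence maximal, $1$-planar graph with $Q(Q') = Q$. Then I would re-examine the neighbourhoods used in the proof of Theorem~\ref{thm:quadrangulation-cn-3}. For a dodecahedron vertex $r$ with neighbours $d_1,d_2,d_3$ in $D$ and corresponding subdivision vertices $d_1',d_2',d_3'$, the robber's safety argument needed: (i) in $Q'$, the only vertex common to $N_{Q'}[d_i'] \cup N_{Q'}^2[d_i]$ and $N_{Q'}[d_j'] \cup N_{Q'}^2[d_j]$ for $i \neq j$ is still $r$; and (ii) a cop starting outside $N_{Q'}[d_j']$ cannot reach $d_j'$ in one cop move, and a cop starting outside $N_{Q'}^2[d_j]$ cannot reach $d_j$ in two cop moves. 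Part (ii) is immediate by definition of these neighbourhoods in $Q'$ itself. Part (i) is the crux: I must verify that the diagonal edges added inside the faces incident to $r$ and to $d_i'$ do not introduce a new vertex (other than $r$) lying in two of the three gray regions. This is a finite, local check on the faces of $Q$ around $r$, $d_i'$, and $d_i$; since in $Q$ the vertex $r$ has degree three (its $D$-edges were subdivided) and the local picture around each $d_i$ and $d_i'$ is the fixed gadget from Figure~\ref{fig:dodecahedron-face}, adding the two diagonals in each such face only connects corners of that face, and one checks that the resulting region still meets its siblings only at $r$.

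The main obstacle is precisely this locality verification: the crossing diagonals could in principle merge two of the previously-disjoint neighbourhoods, and one has to confirm they do not. I expect this to come down to observing that any new edge $ac$ inside a face of $Q$ lies within a single gadget attached to one original dodecahedron face, so it cannot bridge regions associated with two distinct neighbours $d_i, d_j$ of $r$ except through $r$ itself — the same cut vertex that already separated them in $Q$. Once (i) and (ii) are re-established in $Q'$, the inductive evasion argument of Theorem~\ref{thm:quadrangulation-cn-3} goes through verbatim: at the start of each relevant round, since two cops can occupy at most two of the three pairwise-almost-disjoint regions, some $N_{Q'}[d_j'] \cup N_{Q'}^2[d_j]$ is cop-free, the robber moves to $d_j'$ then $d_j$ safely, and reaches another vertex of $V(D)$ uncaught. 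Hence two cops never suffice, so $c(Q') \geq 3$. \qed
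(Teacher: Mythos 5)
Your proposal is correct and follows essentially the same route as the paper: reuse the robber strategy and induction from Theorem~\ref{thm:quadrangulation-cn-3}, observing that the neighbourhoods $N[d_i']\cup N^2[d_i]$ grow in $Q'$ but still pairwise intersect only in $r$, which is exactly the paper's (brief) argument. You are somewhat more explicit than the paper in isolating the local verification that the added kite diagonals cannot bridge two of the three regions, which is a reasonable elaboration rather than a different approach.
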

\begin{proof}
The proof uses the same robber strategy and induction proof as Theorem~\ref{thm:quadrangulation-cn-3}. The only difference is that the closed neighborhood $N[d'_i]$ of $d'_i$ and the closed distance-two neighborhood $N^{2}[d_i]$ of $d_i$, for each $i=1, 2, 3$, are larger, but it still holds that
the only vertex shared by $N[d'_i] \cup N^{2}[d_i]$ and $N[d'_j] \cup N^{2}[d_j]$ (see the gray regions in Figure~\ref{fig:optimal-cn-3-proof}) is $r$. Therefore, for some $j \in \{1,2,3\}$, the set of vertices $N[d'_j] \cup N^{2}[d_j]$ is cop-free, and the robber can move to vertex $d_j \in V(D)$ via $d'_j$ without being caught.
\shahininline{Even if you use the same induction, it may be good to re-state in a complete proof in the appendix (call this proof a sketch).}
\qed
\end{proof}

\subsection{Upper Bound}
\label{sec:max1Planar-upperBound}

Aigner and Fromme~\cite[Theorem 6]{af-agocar-82} proved that planar graphs have cop number at most three. One of the main arguments in their proof is the \emph{Jordan curve property}, i.e., that a Jordan curve partitions the surface into two connected regions: interior and exterior. Bonato and Mohar~\cite{bm-tdicar-17} noted that this approach could also be used for designing a three-cop strategy for other families of graphs that share this property, i.e., graphs with drawings in which there exist special separating cycles (that will play the role of the Jordan curves) that divide the drawing into interior and exterior.
Once we introduce crossings, however, it becomes unclear how to identify such separating cycles, and this approach fails for 1-planar graphs (see Theorem~\ref{thm:1PlanarUnbounded}). In this section, we show that we can still use the Jordan curve property to devise a three-cop strategy for maximal 1-planar graphs; see Theorem~\ref{thm:maximal-cn-3}. We will take advantage of the structure of maximal 1-planar graphs, in particular, the fact that
in every maximal 1-planar drawing, every pair of crossing edges $ab$ and $cd$ is enclosed in a quadrangle $acbd$ with four uncrossed edges (forming a \emph{kite} together with the pair of crossing edges)~\cite{barat-1-planar-density-2018}; see Figure~\ref{fig:kite}. Our proof strategy will follow closely that of Bonato and Nowakowski~\cite[Theorem 4.25]{cr-book}; however, we will need different building blocks that take into account the structure of maximal 1-planar graphs.

\begin{figure}
    \centering
    \includegraphics{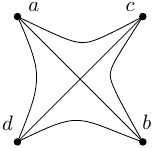}
    \caption{In a maximal 1-planar graph, every pair of crossing edges induces a kite.}
    \label{fig:kite}
\end{figure}

\begin{lemma}
\label{lem:shortes-path-no-crossing}
Let $H$ be an induced subgraph of a maximal 1-planar graph, let $u,v\in V(H)$ be distinct, and let
$P$ be
a shortest path between $u$ and $v$ in $H$. 
Then no two edges of $P$ cross.
\end{lemma}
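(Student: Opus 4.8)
The plan is to argue by contradiction, exploiting the kite structure of maximal 1-planar drawings. Suppose $P$ is a shortest $u$--$v$ path in the induced subgraph $H$, and suppose two of its edges cross, say $e_1 = ab$ and $e_2 = cd$. Since $H$ is an induced subgraph of a maximal 1-planar graph $G$, and the drawing of $G$ restricted to $H$ is still 1-planar, the crossing pair $ab$, $cd$ is enclosed in a kite: by the result of Bar\'at and T\'oth~\cite{barat-1-planar-density-2018} cited above, in the ambient maximal 1-planar drawing the four edges $ac$, $cb$, $bd$, $da$ are present and uncrossed, forming the quadrangle $acbd$. The key point I would establish is that these four kite edges are in fact edges of $H$: since $H$ is an \emph{induced} subgraph and the endpoints $a,b,c,d$ all lie on $P \subseteq H$, every edge of $G$ among $\{a,b,c,d\}$ is present in $H$.

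Next I would use the kite edges to build a shortcut along $P$, contradicting minimality. Walking along $P$ from $u$ to $v$, the four vertices $a,b,c,d$ appear in some order; note $ab$ and $cd$ are both edges of $P$, so along $P$ the pair $a,b$ appears consecutively and the pair $c,d$ appears consecutively. Without loss of generality the order along $P$ is $u, \dots, a, b, \dots, c, d, \dots, v$ (the other cases, e.g. $c,d$ before $a,b$, or the reversals within a pair, are symmetric). Then the subpath of $P$ from $b$ to $c$ has length at least $1$, but $b$ and $c$ are joined by the kite edge $cb \in E(H)$; replacing the $b$-to-$c$ subpath of $P$ by this single edge yields a shorter $u$--$v$ walk in $H$ (hence a shorter path), contradicting that $P$ is a shortest path --- unless the $b$-to-$c$ subpath already has length exactly $1$, i.e. $bc$ itself is the edge $cb$. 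But in that case $P$ contains the edges $ab$, $bc=cb$, $cd$ in succession, so $a$ and $d$ are at distance $3$ along $P$, while the kite edge $da \in E(H)$ joins them directly; again replacing gives a shorter path. In every case we reach a contradiction, so no two edges of $P$ cross.

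The main obstacle, and the step deserving the most care, is confirming that the kite edges actually belong to $H$ and to $P$'s relevant structure --- in particular handling the interaction between ``$H$ is induced'' and ``the kite lives in the ambient maximal drawing of $G$.'' One must be sure the crossing we picked really is a crossing in the drawing of $G$ (it is, since the drawing of $H$ is inherited from that of $G$), so that the Bar\'at--T\'oth kite lemma applies, and then that all four kite vertices lie in $V(H)$ (true, they are on $P$) so the kite edges, being edges of $G$ on a vertex set contained in $V(H)$, are edges of $H$. A secondary subtlety is the case analysis on the cyclic order of $a,b,c,d$ along $P$: since $\{a,b\}$ and $\{c,d\}$ each form an edge of $P$, the two pairs are ``intervals'' of $P$ that are disjoint (they cannot share a vertex, else $P$ would repeat a vertex), so up to reversing $P$ and relabelling within each pair there is essentially one case, and the shortcut argument closes it. I would also remark that the same argument shows, more strongly, that no edge of $P$ is crossed by \emph{any} edge of $P$, which is exactly the statement.
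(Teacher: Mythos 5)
Your proof is correct and takes essentially the same approach as the paper's: a crossing pair of path edges forces a kite (by the Bar\'at--T\'oth property), and one of the kite's side edges yields a shortcut contradicting the minimality of $P$. The paper's version is a two-line sketch; your added care about the kite edges being inherited by the induced subgraph $H$ and the case analysis on the order of $a,b,c,d$ along $P$ simply fills in the details it leaves implicit.
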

\begin{proof}
Assume for the sake of contradiction that two edges $ab$ and $cd$ of $P$ cross. Then there is a kite $acbd$ and there is a shorter path that takes one of the side edges ($ac$, $cb$, $bd$, or $da$) of the kite instead of the two crossing edges.
\qed
\end{proof}

\begin{lemma}
\label{lem:isometric-path-crossings}
Let $H$ be an induced subgraph of a maximal 1-planar graph such that the robber can only move in $H$, let $u,v\in V(H)$ be distinct, and let
$P$ be
a shortest path between $u$ and $v$ in $H$. 
Then a single cop on $P$ can, after a finite number of moves: (a) prevent the robber from entering $P$, that is, the robber will be immediately caught if they move onto $P$; and, (b) catch the robber immediately after it traverses an edge that crosses an edge of $P$.
\end{lemma}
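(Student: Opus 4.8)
The plan is to adapt the classical ``cop guards a shortest path'' argument (Aigner--Fromme) to the 1-planar setting, using Lemma~\ref{lem:shortes-path-no-crossing} to control the ways the robber can ``jump over'' the path $P$ via a crossing edge. First I would have the cop walk along $P$ to the vertex $p\in P$ that minimizes the distance in $H$ from the robber's current position to $P$ (more precisely, that is closest to realizing $\mathrm{dist}_H(r,P)$ along a shortest robber-to-$P$ path); this is the standard ``shadow'' strategy, and since $P$ is a path it takes only finitely many moves for the cop to reach and then maintain this position. The key invariant to establish is: after this finite initialization, whenever it is the robber's turn, the cop occupies the vertex of $P$ closest (in $H$) to the robber. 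Because $P$ is isometric-like — by Lemma~\ref{lem:shortes-path-no-crossing} no two of its edges cross, so $P$ behaves like a genuine separating arc together with the rest of the drawing — the robber cannot decrease its distance to $P$ faster than the cop can track it.

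Next I would handle the two claims. For (a): if the robber moves onto a vertex $p'\in P$, then just before that move the robber was at a neighbour of $p'$, so its distance to $P$ was $1$ (or $0$), and by the invariant the cop was on the vertex of $P$ nearest the robber, hence either on $p'$ itself or on a vertex of $P$ adjacent to the robber's previous position; a short case check (using that $P$ is a shortest path, so its own vertices are at pairwise $P$-distance equal to their $H$-distance) shows the cop can move to $p'$ and capture. For (b): suppose the robber traverses an edge $e=xy$ that crosses some edge $ab$ of $P$. By the kite structure of maximal 1-planar drawings, $a,x,b,y$ (in some order) form a kite $axby$ with the four side edges $ax,xb,by,ya$ present and uncrossed. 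So both endpoints $x$ and $y$ of the robber's edge are adjacent to both $a$ and $b$, which lie on $P$ and are consecutive on $P$. Hence $\mathrm{dist}_H(x,P)\le 1$ and $\mathrm{dist}_H(y,P)\le 1$, and in fact $x$ and $y$ have a common neighbour on $P$ among $\{a,b\}$. By the invariant, before the robber's move the cop sat on the $P$-vertex nearest the robber's position $x$, which is $a$ or $b$; after the robber moves to $y$, the cop moves along $P$ (one step, since $a$ and $b$ are $P$-adjacent) to whichever of $a,b$ is adjacent to $y$, and captures on the following turn — or, tightening the bookkeeping so that ``immediately'' holds, one shows the cop can already be positioned to take $y$ on its very next move.

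The main obstacle I expect is making the ``distance to $P$'' potential argument rigorous in the presence of crossings: an edge of the graph that is \emph{not} on $P$ but crosses an edge of $P$ is exactly the mechanism by which the robber could, a priori, teleport from one side of $P$ to the other without the cop noticing, so the crux is showing that every such crossing edge is ``short-circuited'' by kite side-edges that keep both of its endpoints within distance one of \emph{consecutive} vertices of $P$. Once that is in hand, the crossing edge gives the robber essentially no advantage over an ordinary edge incident to $P$, and the standard argument that the cop can force $\mathrm{dist}_H(r,\text{cop})$ never to increase beyond the tracking bound goes through; the finiteness of the initialization phase is immediate because $P$ has finitely many vertices and the cop moves monotonically toward the robber's shadow.
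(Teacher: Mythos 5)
Your proposal is correct and follows essentially the same route as the paper: part (a) is the standard shortest-path guarding result (which the paper simply cites from Bonato and Nowakowski), and part (b) hinges on exactly the same observation, namely that by the kite structure of maximal 1-planar drawings both endpoints of a crossing edge are adjacent to both endpoints $v_i,v_{i+1}$ of the crossed edge of $P$, so the guarding cop, already stationed at $v_i$ or $v_{i+1}$, can step onto the robber's landing vertex immediately. The only slight wrinkle is your initial ``captures on the following turn'' phrasing, but you correctly tighten this yourself by noting the cop is adjacent to both possible landing spots, which is precisely the paper's argument.
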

\begin{proof}
    The proof of (a) holds for general graphs and can be found in the book of Bonato and Nowakowski~\cite[Theorem 1.7]{cr-book}. 
    The idea behind the proof is as follows. Because the shortest path $P$ does not have shortcuts, after a finite number of moves, the cop can follow the ``projection" of the robber onto the path, and thus, always remain close to the vertex of the path where the robber might enter.

    To prove (b), suppose that after sufficiently many moves, one of the cops guards the path $P$ in the sense of~(a). Assume the robber is at one of the endpoints of some edge $ab$,  say $a$, crossing some edge $v_iv_{i+1}$ of the path $P$. Since $H$ is an induced subgraph of a maximal 1-planar graph, $ab$ together with  $v_iv_{i+1}$ form a kite $av_ibv_{i+1}$. Consider a cop strategy in which $P$ is guarded by one cop in the sense of~(a).
    That is, the cop moves in a way to catch the robber as soon as the robber enters $P$. So, after a finite number of moves, if the robber is at $a$, the cop will be at $v_i$ or $v_{i+1}$, in anticipation of the robber moving to any of these vertices (note that the edges between $a$ and $v_i$ and $v_{i+1}$ exist due to the kite structure). That is, when the robber moves from $a$ to $b$, the cop will be at either $v_i$ or $v_{i+1}$ and can move to $b$ to catch the robber in the next step.  \sk{I rewrote this part for clarity.} 
\qed
\end{proof}
In light of Lemma~\ref{lem:isometric-path-crossings}, we say that a cop \emph{guards}  the shortest path $P$ between two vertices (referred to also as an \emph{isometric path}) if after a finite number of moves, it can prevent the robber from entering or crossing~$P$. Observe that, according to Lemma~\ref{lem:shortes-path-no-crossing}, no two edges of $P$ cross.

\begin{lemma}
\label{lem:jordan+isometric}
Let $H$ be an induced subgraph of a maximal 1-planar graph such that the robber can only move in $H$, let $u,v\in V(H)$ be distinct, and let
$P_1$ and $P_2$ be two internally disjoint paths from $u$ to $v$ such that $P_1$ is isometric in $H$, $P_2$ is isometric in $H-(V(P_1)\setminus \{u, v\})$, and no edge of $P_1$ crosses an edge of $P_2$. Then $P_1 \cup P_2$ can be guarded by two cops in~$H$.
\end{lemma}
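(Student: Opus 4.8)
The plan is to have the two cops cooperate so that after finitely many moves one cop statically (in the sense of Lemma~\ref{lem:isometric-path-crossings}) guards $P_1$ and the other statically guards $P_2$, after which neither can ever be abandoned and the robber is confined to one side of the closed curve $C = P_1 \cup P_2$. First I would note that $C$ is a cycle (since $P_1,P_2$ are internally disjoint $u$--$v$ paths), and by hypothesis no edge of $P_1$ crosses an edge of $P_2$, while by Lemma~\ref{lem:shortes-path-no-crossing} no two edges inside $P_1$ cross and no two edges inside $P_2$ cross; hence $C$ is drawn as a genuine Jordan curve in the plane with no self-crossings. Therefore $C$ partitions the rest of the drawing into an interior region and an exterior region, and — crucially — any edge of $H$ joining an interior vertex to an exterior vertex must cross $C$, i.e.\ must cross an edge of $P_1$ or of $P_2$. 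This is exactly the Jordan-curve property that Aigner and Fromme exploit; the only subtlety in the 1-planar setting is the word ``cross'', which is handled by part (b) of Lemma~\ref{lem:isometric-path-crossings}: a cop guarding an isometric path not only blocks entry to the path but also catches the robber the instant it traverses an edge crossing that path.

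The steps, in order: (i) Send cop $c_1$ to $P_1$ and let it reach the guarding position of Lemma~\ref{lem:isometric-path-crossings} for $P_1$; this takes finitely many moves, during which the robber moves freely in $H$. (ii) Now observe that $P_2$ is isometric in $H' := H - (V(P_1)\setminus\{u,v\})$, the graph the robber effectively inhabits once $c_1$ is guarding $P_1$ — because the robber can no longer safely touch any internal vertex of $P_1$. Send cop $c_2$ into $H'$ to reach the guarding position for $P_2$ within $H'$; again finitely many moves. Here one must check that while $c_2$ is getting into position, $c_1$ can keep guarding $P_1$: this is fine because guarding in the sense of Lemma~\ref{lem:isometric-path-crossings}(a)--(b) is a one-cop task that, once achieved, is maintained by $c_1$ alone forever, independently of $c_2$'s actions. (iii) Once both cops guard their paths, the robber sits in (say) the interior of $C$ — WLOG, and if the robber is on $C$ itself it is already caught. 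To leave the interior the robber would have to either step onto a vertex of $P_1$ or $P_2$ (caught immediately by part (a)), or traverse an edge crossing $P_1$ or crossing $P_2$ (caught immediately by part (b)). In either case the robber is captured; hence it is trapped in the interior and $P_1 \cup P_2$ is guarded by the two cops.

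The main obstacle I expect is step (iii)'s case analysis on how the robber might ``escape'' across $C$: in a 1-planar drawing an edge $e$ incident to the robber could cross $C$ while its own crossing partner is some third edge not on $C$ at all, so one has to be careful that ``$e$ crosses an edge of $P_1\cup P_2$'' is really the only way to change regions, and that this is precisely the situation covered by Lemma~\ref{lem:isometric-path-crossings}(b) (which assumed the crossed edge is an edge of the guarded path and used the kite structure of maximal 1-planar drawings). A secondary point to get right is that $C$ has no self-crossings — without the hypothesis that no edge of $P_1$ crosses an edge of $P_2$ this could fail and the Jordan-curve argument would collapse — so it is worth stating explicitly that the hypotheses of the lemma are exactly what make $C$ a simple closed curve. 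Everything else (finiteness of the set-up phase, independence of the two guarding tasks) is routine given Lemma~\ref{lem:isometric-path-crossings}.
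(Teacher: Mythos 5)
Your proposal is correct and follows essentially the same route as the paper: the paper's proof is simply a double application of Lemma~\ref{lem:isometric-path-crossings}, guarding $P_1$ (isometric in $H$) with one cop and then $P_2$ (isometric in $H-(V(P_1)\setminus\{u,v\})$) with the other. You additionally spell out the two points the paper leaves implicit — that the robber is effectively confined to $H-(V(P_1)\setminus\{u,v\})$ once the first cop is in guarding position, and that the no-mutual-crossing hypothesis makes $P_1\cup P_2$ a simple closed curve so that region changes are exactly the events covered by parts (a) and (b) of Lemma~\ref{lem:isometric-path-crossings} — which is a useful elaboration but not a different argument.
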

\begin{proof}
According to Lemma~\ref{lem:isometric-path-crossings}, since $P_1$ is isometric in $H$, it can be guarded by one cop.  Similarly, since $P_2$ is isometric in $H-(V(P_1)\setminus \{u, v\})$ it can also be guarded by one cop.\qed
\end{proof}




\begin{restatable}{theorem}{onePlanarTheorem}
\label{thm:maximal-cn-3}
Any maximal 1-planar graph $G$ has cop number at most three. 
\end{restatable}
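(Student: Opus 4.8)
The plan is to follow the Aigner--Fromme three-cop strategy as adapted by Bonato and Nowakowski for planar graphs, replacing each ``isometric path cuts the plane into inside and outside'' step with the crossing-robust guarding tool from Lemma~\ref{lem:isometric-path-crossings} and Lemma~\ref{lem:jordan+isometric}. Fix a maximal 1-planar drawing of $G$. We maintain as an invariant that, after finitely many moves, two of the three cops are positioned so as to guard (in the sense made precise after Lemma~\ref{lem:isometric-path-crossings}) the union $P_1\cup P_2$ of two internally disjoint isometric paths between two vertices $u,v$; this union is a cycle-like curve $C$ in the drawing. The key point is that, because each guarded path contains no pair of crossing edges (Lemma~\ref{lem:shortes-path-no-crossing}) and no edge of $P_1$ crosses an edge of $P_2$, the closed curve traced by $C$ in the drawing is a genuine Jordan curve: it separates the drawing into an interior region and an exterior region, and by Lemma~\ref{lem:isometric-path-crossings}(b) the robber cannot cross an edge of $C$ without being immediately captured, nor can the robber ``jump'' the curve via a crossing edge. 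Hence the robber is confined to whichever side (interior or exterior) it currently occupies, and the two guarding cops are thereafter never needed to move off $C$.

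Next I would run the standard shrinking argument with the free third cop. Let $H$ be the induced subgraph of $G$ on the vertices of the robber's region together with $V(C)$; the robber can only move in $H$, so all of the lemmas above apply to $H$. The third cop computes a new isometric path $P_3$ in $H$ that splits the robber's region into two smaller pieces — concretely, a shortest path between two suitably chosen vertices of $C$ — moves onto it, and guards it by Lemma~\ref{lem:isometric-path-crossings}. At that moment the robber is trapped in a region whose boundary consists of a sub-arc of $C$ together with $P_3$; one of the two original guarding cops is now guarding more of the boundary than necessary and can be freed. We re-designate cops so that two of them guard the (strictly smaller) new boundary curve and one is free again, and repeat. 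Since each iteration strictly decreases the number of vertices available to the robber, after finitely many rounds the robber's region is a single vertex (or small enough that a cop sits on it), and the robber is caught. The base case — getting the first two cops onto a pair of isometric paths forming a separating curve — is handled exactly as in the planar proof: the first cop guards a shortest $u$--$v$ path for an arbitrary edge $uv$, then a second cop guards a shortest path between the same endpoints in the graph with the interior of the first path deleted; maximal 1-planarity guarantees at least one such second path exists as long as the region is nontrivial.

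There are two genuine obstacles. The first, and main one, is verifying that the curve $C=P_1\cup P_2$ really is a Jordan curve in the drawing and that ``interior'' and ``exterior'' remain well-defined across crossings: an edge of $G$ not on $C$ may cross an edge of $C$, so a robber edge could in principle have one endpoint drawn inside $C$ and one outside even though the robber never ``enters'' $C$ as a vertex. This is exactly where Lemma~\ref{lem:isometric-path-crossings}(b) and the kite structure do the work — any edge crossing an edge of $C$ forms a kite whose four side edges connect its endpoints to consecutive vertices of $C$, so traversing such a crossing edge would land the robber on a vertex adjacent to a cop position on $C$; I need to spell out that the kite's side edges are themselves uncrossed and that the guarding cop on $C$ can anticipate both kite endpoints, so the robber truly cannot pass. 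The second, more routine obstacle is bookkeeping in the shrinking step: I must choose the new isometric path $P_3$ with endpoints on the current boundary curve so that it genuinely subdivides the robber region into two nonempty parts, ensure $P_3$ shares no crossing with the retained boundary arc (again via Lemma~\ref{lem:shortes-path-no-crossing} applied in $H$, plus a short argument that a shortest path in $H$ cannot cross a boundary edge without creating a kite shortcut), and confirm that exactly two cops suffice to guard the new boundary so the third is freed for the next iteration. With those two points handled, induction on the number of vertices available to the robber closes the proof.
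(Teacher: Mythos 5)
Your proposal is correct and follows essentially the same route as the paper's proof: two cops guard two internally disjoint isometric paths whose union forms a crossing-free separating cycle (via Lemmas~\ref{lem:shortes-path-no-crossing}--\ref{lem:jordan+isometric}, with the kite structure blocking escape across crossing edges), the third cop guards a further isometric path that splits the robber territory, and cops are re-designated to recurse on the strictly smaller region. The two obstacles you flag (well-definedness of interior/exterior across crossings, and the bookkeeping for freeing a cop in the shrinking step) are exactly the points the paper's appendix argument addresses.
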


\begin{proof} 
We provide a sketch of the proof here; see the complete proof in Appendix~\ref{sec:app:thm:maximal-cn-3}.

Given that $G$ is a maximal 1-planar graph, it is 2-connected and thus must contain two internally disjoint shortest paths $P_1$ and $P_2$ between two vertices $v,w\in G$, where 
$P_1$ is isometric in $G$, $P_2$ is isometric in $G \setminus (V(P_1)\setminus \{u, v\})$, and no edge of $P_1$ crosses an edge of $P_2$. 
We let two cops $C_1$ and $C_2$ guard the two paths $P_1$ and $P_2$, respectively, as described in Lemma~\ref{lem:jordan+isometric}. 
Therefore, after a finite number of $t$ steps, the robber must avoid edges that are located on these paths and must avoid crossing them. 
%
%
%
Define the \emph{robber territory} $H$ as the subgraph induced by the vertices in the interior or exterior of the cycle $P_1 \cup P_2$, whichever contains the robber at step $t$. 
Given that the robber territory is non-empty and $G$ is a maximal 1-planar graph, one can consider a third isometric path $P_3$ between $v$ and $w$ that contains a vertex in the robber territory; this path can share some vertices with $P_1$ or $P_2$ (or both). We let the third cop $C_3$ guard $P_3$, as in Lemma~\ref{lem:isometric-path-crossings}. Therefore, after a finite number of steps, the robber's ``safe zone" becomes limited to one of the parts of the partition of the robber's territory created by $P_3$. We show that two cops can guard this area so that the robber stays confined to it, and the third cop can be used to repeat this process in the smaller subgraph that forms the safe zone of the robber.\qed
%
%
\end{proof}

\section{Characterization of Cop-Win Outer 1-Planar Graphs}
\label{sec:outer-k-planar}

A graph $G$ is \emph{outer $k$-planar} if it can be drawn in the plane so that all the vertices are in convex position on the outer boundary and each edge is crossed at most $k$ times. In this section, we provide a complete characterization of the cop number of outer 1-planar graphs.

First, we briefly discuss the cop number of general outer $k$-planar graphs. Joret et al.~\cite{subdivision-lemma-and-tw-over-2} showed that the cop number of a graph can be bounded in terms of its treewidth. In particular, if the treewidth of a graph is at most $t$, then its cop number is at most $t/2 + 1$~\cite{subdivision-lemma-and-tw-over-2}.
Every outer $k$-planar graph has treewidth at most $3k+11$~\cite[Proposition 8.5]{outer-k-planar-tw}, so, its cop number is at most $1.5k+6.5$. 

Restricting attention to outer 1-planar graphs, Auer et al. \cite{auer2016} showed that every outer 1-planar graph has treewidth at most three, which implies that the cop number of outer 1-planar graphs is at most two. Bonato et al.~\cite{Bonato2019OptimizingTT} showed that an outerplanar graph $G$ is cop-win if and only if $G$ is chordal. 
We generalize this result to outer 1-planar graphs, which completes our characterization of $c$-cop-win outer 1-planar graphs for all $c \in \{1, 2\}$. 


 Let $V = (v_1, \ldots, v_n)$ be the cyclic ordering of vertices of an outer 1-planar graph $G = (V, E)$. Let $V[v_i, v_j]$, be the set of vertices $(v_i, v_{i+1}, \ldots, v_j)$, for $i \le j$, and $V(v_i, v_j)$ be the set of vertices $(v_{i+1}, v_{i+2}, \ldots, v_{j-1})$, for $i < j$.

\begin{proposition}\label{prop:o1pcrossingedges}
    Let \(G = (V, E)\) be an outer 1-planar graph. Consider $U \subset V$ such that $G[U]$ forms a cycle. For every pair of vertices $u, w \in U$ such that $V[u, w] \cap U = \{u, w\}$, it holds that if $u$ and $w$ are not adjacent, then there are two edges $e_u, e_w \in E(U)$ incident to $u$ and $w$, respectively, such that $e_u$ and $e_w$ cross.
\end{proposition}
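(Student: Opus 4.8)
The plan is to analyze the local structure around the "gap" between consecutive cycle-vertices $u$ and $w$ (consecutive in the sense that $V[u,w]\cap U = \{u,w\}$). Fix an outer 1-planar drawing of $G$ with the vertices $v_1,\dots,v_n$ in convex position. Since $G[U]$ is a cycle, $u$ has exactly two neighbours in $U$; call the one that is not reachable from $u$ by staying inside $V[u,w]$ (equivalently, the neighbour on the "far side") $u^\ast$, and define $e_u = uu^\ast \in E(U)$. Similarly let $e_w = ww^\ast$ be the edge of the cycle at $w$ leading away from the arc $V[u,w]$. The key observation is that both $e_u$ and $e_w$ must "jump over" the open arc $V(u,w)$: because $U$ forms a single cycle, and $u,w$ are the only vertices of $U$ in $V[u,w]$, the cycle edges at $u$ and at $w$ that are not the (nonexistent, by hypothesis) edge $uw$ must connect to vertices lying outside $V(u,w)$ on the circle. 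I would make this precise by a short Jordan-curve / convex-position argument: a chord with both endpoints in $V[u,w]$ would have to be $uw$ itself (since $U\cap V[u,w]=\{u,w\}$ and cycle edges join vertices of $U$), which is excluded.

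Next I would argue that $e_u$ and $e_w$ cross. Consider the chord $uw$ drawn as a straight segment; it separates the disk into two parts, the part containing the arc $V(u,w)$ (call it the "small side", which contains no vertex of $U$ in its interior boundary arc) and the other part (the "large side"). Both $e_u$ and $e_w$ have one endpoint on the common arc-segment $\{u,w\}$ and their other endpoints ($u^\ast$, $w^\ast$) on the large side. Now I distinguish cases according to the cyclic positions of $u^\ast$ and $w^\ast$ relative to $u$ and $w$. If $e_u$ and $e_w$ did not cross, then by convex position the chords $uu^\ast$ and $ww^\ast$ are nested or disjoint, which forces $u^\ast$ and $w^\ast$ to be arranged so that traversing the cycle from $u$ along $e_u$ and from $w$ along $e_w$ one cannot close up into a single cycle through the arc $V(u,w)$ without some third cycle-edge re-entering the small side — but no vertex of $U$ lies strictly between $u$ and $w$, so no such edge exists, contradicting the fact that $G[U]$ is a connected cycle. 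Hence $e_u$ and $e_w$ must cross, and since the drawing is outer 1-planar this single crossing is permitted. I expect to organize this as: (i) the chord $uw$ splits the disk; (ii) $e_u,e_w$ reach the large side; (iii) non-crossing would disconnect the would-be cycle $G[U]$ at the "small side", the contradiction.

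The main obstacle I anticipate is making the "disconnection" argument fully rigorous rather than hand-wavy: one has to track how the cycle $G[U]$ visits the two sides of the chord $uw$ and show that if the two edges $e_u,e_w$ are nested (non-crossing) then the cyclic sequence of $U$-vertices along the cycle $G[U]$ cannot be consistent with their cyclic order on the convex boundary — essentially an interval/parenthesization argument on a cyclic word. A clean way to do this is to contract the arc-side: since $V(u,w)\cap U=\emptyset$, collapsing the boundary arc from $u$ to $w$ gives a setting where $u$ and $w$ are "adjacent on the circle", and then $e_u,e_w$ are two chords emanating from two adjacent boundary points into the rest of the circle; they fail to cross iff their far endpoints $u^\ast,w^\ast$ appear in the order $u, w, w^\ast, u^\ast$ (non-crossing) rather than $u,w,u^\ast,w^\ast$ (crossing) around the circle. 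In the non-crossing case the sub-path of the cycle $G[U]$ from $u^\ast$ around to $w^\ast$ (not through $u,w$) together with $e_u,e_w$ would have to "span" $w^\ast$ both before and after $u^\ast$, which is impossible for a simple cycle. Filling in this last combinatorial step carefully is the crux; everything else is routine bookkeeping about convex position and the kite structure guaranteed for crossing edges in 1-planar drawings.
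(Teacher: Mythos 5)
There is a genuine gap, and it sits exactly where you anticipated trouble: the ``non-crossing implies the cycle cannot close up'' step is not merely hard to make rigorous --- it is false. Your argument never actually uses outer $1$-planarity to derive the contradiction; as written, it would prove the purely combinatorial claim that for any Hamiltonian cycle on points in convex position, two consecutive-on-the-circle but non-adjacent-in-the-cycle vertices must have crossing incident cycle edges. That claim fails: take six points in circular order $1,\dots,6$ and the cycle $5\text{--}1\text{--}6\text{--}3\text{--}2\text{--}4\text{--}5$. Here $u=1$ and $w=2$ are consecutive on the circle and non-adjacent in the cycle, the edges at $1$ are $15,16$ and the edges at $2$ are $23,24$, and none of the four pairs crosses --- yet the cycle closes up perfectly well without any edge entering the arc between $1$ and $2$. (This drawing is not outer $1$-planar, since the edge $36$ is crossed by both $15$ and $24$; that is precisely why it is not a counterexample to the proposition, but it is a counterexample to your intermediate claim.) A secondary issue is that you fix one specific edge at each of $u$ and $w$ ($e_u=uu^\ast$, $e_w=ww^\ast$) and try to show that particular pair crosses; the proposition only asserts that \emph{some} pair among the four crosses, and indeed for a $4$-cycle one of the two ``diagonal'' pairings does not cross while the other does. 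Your definition of $u^\ast$ (``the neighbour not reachable by staying inside $V[u,w]$'') is also ambiguous, since both cycle-neighbours of $u$ lie outside $V(u,w)$.

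The paper's proof shows how the $1$-planarity hypothesis must enter: assume no cycle edge at $u$ crosses a cycle edge at $w$, then follow the two cycle paths from $w$'s neighbours $w_1$ and $w_2$ back to $u$. The first path must contain an edge $e_1$ that crosses $ww_2$ (to escape the region cut off by $ww_2$), and the second path must contain an edge $e_2$ that crosses $e_1$; hence $e_1$ is crossed twice, contradicting outer $1$-planarity. To repair your proof you would need a step of this kind --- exhibiting an edge forced to be crossed at least twice --- rather than a parity or nesting argument on the cycle's chord diagram alone.
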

\begin{proof}
    Consider such a pair $u, w \in V$, and assume that neither of the cycle edges $uu_1$ nor $uu_2$ crosses any of the cycle edges $ww_1$ and $ww_2$; see Figure~\ref{fig:o1pcrossingedges}.
    Consider the path $P_1 = (w_1, \ldots, u)$ in $G[U]$ from $w_1$ to $u$ such that $w \notin P_1$. It follows that $w_2 \notin P_1$, and so there must be an edge $e_1 = w_1'w_1'' \in E(P_1)$ such that $w_1' \in V[w_1, w_2]$ and $w_1'' \in V[w_2, u_1]$ (note that the edge $e_1$ crosses $ww_2$). Next, consider the path $P_2 = (w_2, \ldots, u)$ in $G[U]$ from $w_2$ to $u$ such that $w, w_1 \notin P_2$. There must be an edge $e_2 = w_2'w_2'' \in E(P_2)$ such that $w_2' \in V[w_1', w_1'']$ and $w_2'' \in V[w_1'', u_1]$ (note that the edge $e_2$ crosses $e_1$). Therefore, $e_1$ is crossed by two edges, $ww_2$ and $w_2'w_2''$, contradicting the fact that $G$ is outer 1-planar. \qed
\end{proof}

\begin{figure}
    \centering
    \includegraphics{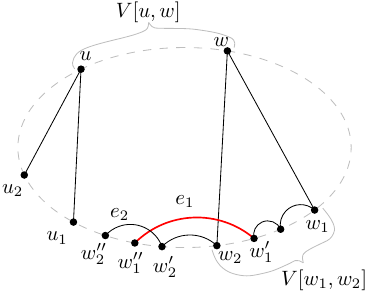}
    \caption{Illustration in support of Proposition~\ref{prop:o1pcrossingedges}.}
    \label{fig:o1pcrossingedges}
\end{figure}

\begin{corollary}\label{prop:o1pComponents}
    Let \(G = (V, E)\) be an outer 1-planar graph.  Consider $U \subset V$ such that $G[U]$ forms a cycle of order $k \ge 4$. For every $u, w \in U$ such that $V(u, w) \cap U = \varnothing$, the number of vertices in $V \setminus V(u,w)$ that are adjacent to at least one vertex in $V(u, w)$ is at most three. 
\end{corollary}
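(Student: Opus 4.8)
The plan is to deduce the corollary from Proposition~\ref{prop:o1pcrossingedges} by a careful case analysis on whether $u$ and $w$ are adjacent in $G[U]$. Write $U = (u = x_0, x_1, \dots, x_{k-1}, w = x_{?})$ traversed around the cycle; since $V(u,w)\cap U=\varnothing$, the two arcs of the cycle $G[U]$ between $u$ and $w$ are: the edge-or-path lying ``inside'' $V(u,w)$ region and the long arc $A$ through the remaining vertices of $U$. Actually, because $V(u,w)\cap U = \varnothing$, the cyclic neighbours of $u$ and $w$ along $G[U]$ that point ``towards each other'' must be $u$–$w$ themselves if they are adjacent, or else there is exactly one cycle-edge of $G[U]$ spanning the gap. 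First I would dispose of the easy sub-case: if $uw \in E(G[U])$, then I would argue that the vertices outside $V(u,w)$ adjacent to something in $V(u,w)$ are limited because each such adjacency is an edge drawn across the chord region delimited by $uw$, and outer 1-planarity bounds how many such edges can coexist; here I expect at most the endpoints $u,w$ plus possibly one more vertex forced by a single allowed crossing, giving three.

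The main work is the sub-case where $u$ and $w$ are \emph{not} adjacent. Here Proposition~\ref{prop:o1pcrossingedges} applies directly (taking the pair $u,w$ with $V[u,w]\cap U = \{u,w\}$, which holds since $V(u,w)\cap U=\varnothing$): there are cycle-edges $e_u \ni u$ and $e_w \ni w$ of $G[U]$ that cross. Now consider any vertex $z \in V\setminus V(u,w)$ that has a neighbour $y \in V(u,w)$. The edge $zy$ must be drawn in the ``lens'' region $R$ cut off by the arc of the outer circle spanning $V[u,w]$ together with the chord structure near $u,w$; in particular $zy$ must cross the boundary of $R$, and the boundary is essentially controlled by the edges $e_u, e_w$ (and the segment of the convex hull). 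Since $e_u$ already crosses $e_w$, outer 1-planarity forbids any further edge from crossing $e_u$ or $e_w$. Hence every such edge $zy$ must have $z \in \{u, w\}$, \emph{unless} it can enter $R$ without crossing $e_u$ or $e_w$ — which, by convex position, can only happen through the short boundary arc incident to at most one additional vertex adjacent to $u$ or $w$ on the hull. Tallying: $u$, $w$, and at most one exceptional vertex, so at most three.

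The step I expect to be the genuine obstacle is making precise the claim that ``$e_u$ and $e_w$ together with the hull arc bound the region $R$ through which all edges into $V(u,w)$ must pass.'' This requires a clean topological argument: the convex-position drawing lets me replace each vertex by a point on a circle and each edge by a straight chord, and then $V(u,w)$ occupies a contiguous arc $\gamma$; any chord from outside $V(u,w)$ to a point of $\gamma$ has one endpoint strictly inside the circular arc $\gamma$ and one outside, so it separates $\gamma$ and must cross every chord that ``shields'' $\gamma$ from the outside. I would identify $e_u$ and $e_w$ as a pair of such shielding chords (they cross each other and their four endpoints sit on the two short hull-arcs flanking $\gamma$), argue that together they leave only a bounded-size ``uncovered'' portion of the hull adjacent to $\gamma$, and conclude that at most three vertices of $V\setminus V(u,w)$ can reach $\gamma$ without violating the one-crossing-per-edge budget already spent by $e_u \times e_w$. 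Once that region description is pinned down, the counting is immediate.
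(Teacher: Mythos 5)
Your plan follows essentially the same route as the paper: invoke Proposition~\ref{prop:o1pcrossingedges}, split on whether $u$ and $w$ are adjacent, and in each case use the one-crossing budget of outer 1-planarity (the chord $uw$ can be crossed only once; the already-crossing pair $e_u,e_w$ cannot be crossed again) to bound the edges leaving $V(u,w)$. One small simplification you can make: in the non-adjacent case there is no ``exceptional vertex'' to worry about, since in convex position the crossing chords $e_u$ and $e_w$ together with the arc spanned by $V[u,w]$ completely enclose $V(u,w)$, so every edge from $V(u,w)$ to a vertex other than $u$ or $w$ must cross $e_u$ or $e_w$ and hence cannot exist; the bound of three then comes solely from $u$, $w$, and the single extra vertex possible in the adjacent case.
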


\begin{proof}
    By Proposition~\ref{prop:o1pcrossingedges}, either $u$ and $w$ are adjacent, or, there are two edges $e_u$ and $e_w$ incident to $u$ and $w$, respectively, that cross. First, suppose that $u$ and $w$ are adjacent. Then, each edge $vv' \in E(G)$ such that $v \in {V}(u, w)$ and $v' \in V \setminus {V}(u, w)$ and $v' \not\in \{u,w\}$ must cross the edge $uw$, so there can be at most one such edge $vv'$ since $G$ is outer 1-planar. Next, suppose that $u$ and $w$ are not adjacent. Consequently, for any $v \in {V}(u, w)$ and any $v' \in V \setminus {V}(u, w)$ with $v' \not\in \{u,w\}$, an edge $vv'$ would have to cross either $e_u$ or $e_w$. But since $G$ is outer 1-planar and there is a crossing between $e_u$ and $e_w$, it follows that no such edge $vv'$ exists. So, in both cases above, we proved that there is at most one vertex $v' \in V \setminus {V}(u, w)$ with $v' \not\in \{u,w\}$ such that $v'$ is adjacent to at least one vertex in ${V}(u, w)$. The fact that $u$ and/or $w$ might be adjacent to at least one vertex in ${V}(u, w)$ gives the desired upper bound of three.\qed
\end{proof}

\begin{theorem}
\label{thm:outer-1-planar-characterization}
A connected outer 1-planar graph $G = (V,E)$ is cop-win if and only if it is chordal.
\end{theorem}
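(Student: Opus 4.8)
## Proof Proposal for Theorem~\ref{thm:outer-1-planar-characterization}

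\textbf{Overall strategy.} The plan is to prove both directions using the dismantling/retract characterization of cop-win graphs (a graph is cop-win iff it is dismantlable, i.e., admits a domination elimination ordering). The easy direction is that every chordal graph is cop-win, which is already classical (every perfect elimination ordering of a chordal graph is a dismantling ordering, since a simplicial vertex is dominated by any of its neighbours); I would simply cite this. The substantive direction is the forward one: if a connected outer 1-planar graph $G$ is \emph{not} chordal, then $G$ is not cop-win, equivalently $c(G) \ge 2$. Since we already know $c(G) \le 2$ for outer 1-planar graphs via treewidth, this is really a dichotomy result, so it suffices to exhibit a robber strategy against a single cop whenever $G$ contains an induced cycle of length at least four.

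\textbf{The main argument.} Let $C$ be an induced cycle in $G$ of length $k \ge 4$; fix an outer 1-planar drawing of $G$. The idea is to show the robber can always stay on $C$, or more precisely can always remain at a vertex of $C$ that is not dominated (with respect to the cop's position) by the cop's move. The key structural input is Corollary~\ref{prop:o1pComponents}: taking $U = V(C)$, for any two consecutive vertices $u,w$ of $C$ along the outer cyclic order (so $V(u,w)\cap U = \varnothing$), only at most three vertices outside the open arc $V(u,w)$ can "see" into $V(u,w)$. Applying this with the cop sitting on some vertex $x$: the vertices of $C$ split the convex boundary into $k$ arcs; the cop $x$ lies in one of these arcs (or on $C$ itself), and the arcs not containing $x$ contain vertices of $C$ that are "far" from $x$. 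I would formalize a \emph{safe region} for the robber — the portion of $C$ that lies in the arcs not adjacent to the cop's arc — and argue that (i) this region is nonempty (using $k \ge 4$, so there are at least four arcs and the cop's arc is adjacent to only two of them), and (ii) when the cop moves from $x$ to an adjacent vertex $x'$, the robber can move one step along $C$ so as to remain in the safe region. The crossing/adjacency constraints from Proposition~\ref{prop:o1pcrossingedges} and Corollary~\ref{prop:o1pComponents} are what prevent the cop from having a short "jump" that would let it threaten a distant arc.

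\textbf{Executing the robber strategy.} Concretely, I would maintain the invariant that at the start of the robber's turn the cop is at distance at least two from the robber's current vertex $r \in V(C)$, and moreover $r$ is not adjacent to any vertex from which the cop can reach $r$ in one move — i.e. the cop cannot capture $r$ on its next turn. Since $C$ has length $\ge 4$, $r$ has two $C$-neighbours $r^-, r^+$; the robber will move to whichever of them keeps it away from the cop. The crucial claim is that the cop, being at a single vertex, cannot simultaneously threaten both $r^-$ and $r^+$ on its next turn — this is where I expect to use outer 1-planarity: a vertex threatening both $r^-$ and $r^+$ would need edges reaching "across" the cycle in a way that forces two crossings on a single edge, or a chord of $C$ (contradicting that $C$ is induced). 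If the cop stays put or moves away, the robber can even stay still. Care is needed in the base step (initial placement): the robber should choose a starting vertex of $C$ that is non-adjacent and at distance $\ge 2$ from the cop's start, which is possible because a single vertex dominates at most... here again one invokes Corollary~\ref{prop:o1pComponents} to bound how much of $C$ a single vertex can dominate.

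\textbf{Main obstacle.} The hard part will be making the "safe region is nonempty and shifts consistently" argument fully rigorous — in particular handling the cases where the cop sits \emph{on} the cycle $C$, or where chords of $G$ incident to $C$-vertices (which exist, $G$ being only required outer 1-planar, not having $C$ as an induced \emph{subgraph of the whole drawing in isolation}) let the cop shortcut between arcs. One must check that even a chord or a crossing edge incident to a $C$-vertex cannot give the cop a one-move threat to two "opposite" safe arcs; this is exactly the content that Proposition~\ref{prop:o1pcrossingedges} and Corollary~\ref{prop:o1pComponents} are designed to supply (at most one crossing per edge severely limits long-range adjacencies across the convex arc structure). A secondary technical point is ensuring the robber's move is always to a \emph{still-safe} vertex after the cop commits, which is the standard parity/alternation bookkeeping in cop-number lower-bound proofs; I would phrase it as an explicit invariant maintained by induction on the round number.
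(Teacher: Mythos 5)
Your proposal is correct and follows essentially the same route as the paper's proof: the robber camps on an induced cycle $U$ of length at least four, and Corollary~\ref{prop:o1pComponents} (at most three vertices outside an arc $V(u,w)$ between consecutive cycle vertices can see into it) is exactly what guarantees that a single cop --- whether on $U$ or inside such an arc --- threatens at most three of the $\ge 4$ cycle vertices, so a safe cycle vertex adjacent to the robber's current one always exists. The paper formalizes your ``safe region'' as two explicit invariants maintained by induction on the round number, which is precisely the bookkeeping you flag as the remaining work.
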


\begin{proof}
It is known that if $G$ is chordal, then it is cop-win (in fact, this is true for any graph~\cite{quilliot-french-1978}). Thus it remains to show that, if $G$ is not chordal, then there is a robber strategy to escape one cop.

Suppose that $G$ is not chordal. Then there is a set $U \subset V$ so that $|U| \geq 4$ and $G[U]$ is a cycle. Let $v_1, \dots, v_n$ be the cyclic ordering of $G$. For all $i \leq j$, let $V(v_i, v_j)$ be the set of vertices $(v_{i+1},v_{i+2},\dots,v_{j-1})$. The robber's strategy is to maintain the following invariants at the end of each of its turns, and will only move if one of them is violated at the start of its turn:
\begin{enumerate}
    \item If the cop is on a vertex $u \in U$, then the robber is on a vertex in $U \setminus N[u]$, where $N[u]$ is the closed neighborhood of $u$.
    \item If, for some $u,w \in U$ such that $V(u,w) \cap U = \varnothing$, the cop is on a vertex in $V(u,w)$, then the robber is on a vertex in $U$ that is not adjacent to any vertex in $V(u,w)$.
\end{enumerate}

Now, we show that this is always possible. We begin by considering the initial position of the cop and robber. First, if the cop is on a vertex $u \in U$, we have that $|N[u] \cap U| = 3$. Then since $|U| \geq 4$, the robber can choose an initial vertex in $U \setminus N[u]$. Second, if the cop is on a vertex in $V(u,w)$ for some $u,w \in U$ such that $V(u,w) \cap U = \varnothing$, then, since there are at most three vertices in $V \setminus V(u,w)$ that are adjacent to at least one vertex in $V(u,w)$ by Corollary~\ref{prop:o1pComponents}, and $|U| \geq 4$, the robber can choose an initial vertex in $U$ that is not adjacent to any vertex in $V(u,w)$. Now we must show that the robber can maintain these invariants. Let $r \geq 1$ and suppose that the invariants are true at the end of the robber's turn in the $r$'th round. This implies that the cop and robber are not adjacent, so the cop cannot catch the robber on its $(r+1)$'st turn. If neither invariant is violated at the beginning of the robber's $(r+1)$'st turn, then we are done, so suppose that one of the invariants is violated. Suppose that the first invariant is violated, i.e., the cop is on a vertex $u_1 \in U$ and the robber is on a vertex $u_2 \in U \cap N(u_1)$. Then, since $|U| \geq 4$, there must be a vertex $u \in N(u_2) \setminus N(u_1)$, so the robber can move to $u$ and the first invariant is satisfied at the end of the robber's turn. Next, suppose that the second invariant is violated, i.e., the cop is on a vertex in $V(u,w)$ for some $u,w \in U$ such that $V(u,w) \cap U = \varnothing$, and the robber is on a vertex $v \in U$ that is adjacent to at least one vertex in $V(u,w)$. We will show that there is a vertex $v' \in U \cap N(v)$ that is not adjacent to a vertex in $V(u,w)$. Suppose, for the sake of a contradiction, that there is no such vertex $v'$. Let $\{v_1,v_2\} = N(v) \cap U$. Then $v, v_1, v_2$ are all adjacent to vertices in $V(u,w)$ by assumption. Now, since neither of the invariants were violated at the end of the robber's $r$'th turn, then we know that at the start of the cop's $(r+1)$'st turn, the cop was not on $v_1$, $v$, or $v_2$, nor was it in $V(u,w)$. But, by assumption, after the cop's $(r+1)$'st turn, the cop is on a vertex in $V(u,w)$. Thus, at the start of the cop's $(r+1)$'st turn, it must be the case that the cop was on a vertex in $V \setminus \{v_1,v,v_2\}$ that is adjacent to at least one vertex in $V(u,w)$. However, this implies that there are at least four vertices in $V \setminus V(u,w)$ that are adjacent to at least one vertex in $V(u, w)$, which contradicts Corollary~\ref{prop:o1pComponents}. By reaching a contradiction, we have proved that there exists a vertex $v' \in U \cap N(v)$ that is not adjacent to a vertex in $V(u,w)$. Thus, the robber can move to this $v'$ and the second invariant is satisfied at the end of the robber's turn.\qed
\end{proof}

\section{Discussion and Directions for Future Research}\label{sec:discussion}

%
%

The cop number of an outer $k$-planar is at most $3k/2 + 13/2$ due to the fact that its treewidth is bounded by $3k+11$ and the relation between the cop number and treewidth of a graph~\cite{subdivision-lemma-and-tw-over-2}. Two questions follow naturally. Is there a non-trivial lower bound expressed as a function of $k$? Can we reduce the multiplicative factor of $3/2$ in the upper bound?


Determining whether a graph $G$ is $k$-cop-win is NP-hard, and $W[2]$-hard parameterized by $k$~\cite{fomin-cr}. Very recently Gahlawat and Zehavi~\cite{fpt} showed that the problem is fixed parameter tractable (FPT) in vertex cover. We would like to restate their open question: whether the problem is FPT in feedback vertex set, treewidth, or treedepth.

\bibliographystyle{splncs04}
\bibliography{refs}


\appendix

\section{Proof of Theorem~\ref{thm:maximal-cn-3}}
\label{sec:app:thm:maximal-cn-3}

Let us define some additional useful notation. 
Let $X$ be a cycle without edge crossings in a maximal 1-planar graph $G$, and let $u$ be a vertex of $G-X$.
The cycle $X$ partitions the plane into two regions: $A_1$ that contains $u$
and $A_2$ which does not. Let $V_1$ denote the vertices contained in $A_1$,
called the interior of $X$ with respect to $u$, and $V_2$ those in $A_2$ which
we call the exterior of $X$ with respect to $u$. The subgraph induced
by $V(X)\cup V_1$ is called \emph{the internal subgraph} determined by $X$, and
written $\intr{X}$. The subgraph induced by $V(X) \cup V_2$ is called \emph{the
external subgraph} determined by $X$, and written $\extr{X}$. We note that
the only way for the robber to pass from the interior to the exterior (or
vice versa), is to pass through a vertex of the cycle $X$ or cross an edge of $X$.

\onePlanarTheorem*
\begin{proof}
Let $G'$ be an induced subgraph of the maximal 1-planar graph $G$. Consider the following steps.

\noindent \textbf{Inductive argument:}
$G'$ is 2-connected and thus must contain two internally disjoint shortest paths $P_1$ and $P_2$ between two vertices $v,w\in G'$, where 
$P_1$ is isometric in $G'$, $P_2$ is isometric in $G' \setminus (V(P_1)\setminus \{u, v\})$, and no edge of $P_1$ crosses an edge of $P_2$. 
We let two cops $C_1$ and $C_2$ guard the two paths $P_1$ and $P_2$, respectively, as described in Lemma~\ref{lem:jordan+isometric}. In the light of Lemma~\ref{lem:isometric-path-crossings}, after a finite number of $t$ steps, the robber must avoid edges that are located on these paths and must avoid crossing them.


Define the \emph{robber territory} $H$ as the subgraph induced by the vertices in the interior or exterior of the cycle $P_1 \cup P_2$, whichever contains the robber. Define the \emph{cop territory} as $G' \setminus H$. Note that any path from the robber to the cop territory $T$ crosses 
$P_1 \cup P_2$ (either at a vertex or via an edge crossing). Therefore, 
the robber must choose to move to or stay at a vertex in $H$ if it wants to avoid capture.

\todo{Do we have to show the induced subgraph is always 2-connected? MK: good point, I think, it follows from how we choose $G'$}
\shahin{I added a note that ``H is a 1-maximal planar graph that is smaller than G'"}
\todo{The induced subgraph H is not maximal 1-planar graph anymore MK: that's true, but we only need $G'$ ($G'$ is $H$ together with $P_1$ and $P_2$) to be a 2-connected subgraph of a maximal 1-planar graph}

We show by induction on the size of $H$ that in a finite number of steps, the three cops $C_1, C_2$, and $C_3$ can move so that the robber territory $H$ decreases by at least one vertex, and we are again in the situation as described in our inductive assumption with $G' = H$. Therefore, by the induction hypothesis, the three cops can catch the robber, given that $H$ has smaller size than $G'$. 

\textbf{Base case:} 
When $H$ is formed by only one vertex $v$, the robber will be trivially caught by the cops in a finite number of rounds; if it crosses $P_1$ or $P_2$, it will be caught by the above observation, and if it stays in $v$, the cops eventually approach the robber and catch it at $v$.

\textbf{Inductive step:}
Now suppose the induction assumption holds. Without loss of generality, we
may assume the robber is located in $\intr{X}$, where $X = P_1 \cup P_2$, and
the subgraph $\extr{X}$ is guarded by $C_1$ and $C_2$. 
Now $G'$ is the subgraph induced on $V(\intr{X})$ and $H$ is the subgraph induced on $V(\intr{X}) \setminus V(X)$. 
%
Let $P_3$ be a simple shortest path between $v$ and $w$ in $G'$ that has at least one vertex of $H$. Given that $H$ is non-empty and $G'$ is 2-connected, such path $P_3$ must exist. \avery{Fengyi pointed out to me: the proof that $P_3$ exists is not rigorous enough (and Shahin also commented on this on Tuesday?)}
According to Lemma~\ref{lem:shortes-path-no-crossing}, $P_3$ does not self-cross (neither at a vertex nor via an edge crossing).
Therefore, the path $P_3$ partitions $G'$ into induced subgraphs $G'_i$ (sharing only the vertices of $P_3$), for $i=1, \dots, \ell$, that are bounded by a contiguous part of $P_3$ and a 
contiguous part of either $P_1$ or $P_2$; see Figure~\ref{fig:regions}. After a finite number of moves, when $C_3$
can guard $P_3$, the robber is trapped in one of such subgraphs, say $G'_i$, as we will describe next. 

Assume $G'_i$ is bounded by $P_1$ and $P_3$, and $P_3$ leaves $P_1$ at some vertex 
$v_1$ (possibly $v_1= v$) and enters back at some vertex $w_1$ (possibly $w_1= w$). Let $P'_1$ be the part of $P_1$ between $v_1$ and $w_1$ and $P'_2$ be the part of $P_3$ between $v_1$ and $w_1$. Observe that $P'_1$ is a shortest path between $v_1$ and $w_1$ in $G'_i$ and $P'_2$ is a shortest path between $v_1$ and $w_1$ that is vertex disjoint from $P'_1$. Therefore, we are back to our induction assumption with $G' = G'_i$, $P_1 = P'_1$, $P_2 = P'_2$, and $H = G'_i - P'_1 - P'_2$. By the induction hypothesis, the three cops will eventually catch the robber, given that $H$ is a maximal 1-planar graph that is smaller than $G'$.

Assume now that $G'_i$ is bounded by $P_2$ and $P_3$, and $P_3$ leaves $P_2$ at some vertex 
$v_2$ (possibly $v_2= v$) and enters back at some vertex $w_2$ (possibly $w_2=w$). Let $P'_1$ be the part of $P_3$ between $v_2$ and $w_2$ and $P'_2$ be the part of $P_2$ between $v_2$ and $w_2$. The path $P'_1$ is a shortest $v_2w_2$-path in $G'_i$; otherwise, $P_3$ would have a shortcut. Therefore, we are again back to our induction assumption with $G' = G'_i$, $P_1 = P'_1$,  $P_2 = P'_2$, and $H = G'_i - P'_1 - P'_2$. Again, by the induction hypothesis, the three cops will eventually catch the robber, given that $H$ is a maximal 1-planar graph that is smaller than $G'$.\qed
\end{proof}
\begin{figure}
    \centering
    \includegraphics{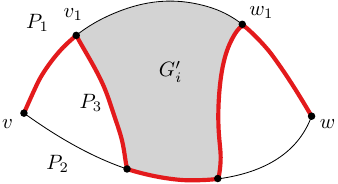}
    \caption{The path $P_3$ partitions $G'$ into induced subgraphs $G'_i$ (sharing only the vertices of $P_3$).}
    \label{fig:regions}
\end{figure}

\end{document}